\newtheorem{thm}{Theorem}[section]
\newtheorem{ques}{Question}[section]
\newtheorem{lem}{Lemma}[section]
\newtheorem{claim}{Claim}[section]
\theoremstyle{definition}
\begin{document}
\title{ Signless Laplacian spectral radius of graphs without short cycles or long cycles
\footnote{Supported by the National Natural Science Foundation of China (Nos. 12171066 and 11871222),
Anhui Provincial Natural Science Foundation (Nos. 2108085MA13 and KJ2020B05).}}

\author{{\bf Wenwen Chen}, {\bf Bing Wang}\thanks{Corresponding author. E-mail addresses:
czcwww@chzu.edu.cn (W. Chen); wuyuwuyou@126.com (B. Wang); mqzhai@chzu.edu.cn
(M. Zhai).}, {\bf Mingqing Zhai}\\
{\footnotesize School of Mathematics and Finance, Chuzhou
University, Anhui, Chuzhou, 239012, China}}

\date{}
\maketitle
{\flushleft\large\bf Abstract}
The signless Laplacian spectral radius of a graph $G$, denoted by $q(G)$,
is the largest eigenvalue of its signless Laplacian matrix.
In this paper, we investigate extremal signless Laplacian spectral radius
for graphs without short cycles or long cycles.
Let $\mathcal{G}(m,g)$ be the family of graphs on $m$ edges with girth $g$
 and $\mathcal{H}(m,c)$ be the family of graphs on $m$ edges with circumference $c$.
More precisely, we obtain the unique extremal graph with maximal $q(G)$ in $\mathcal{G}(m,g)$
and $\mathcal{H}(m,c)$, respectively.

\begin{flushleft}
\textbf{Keywords:} Signless Laplacian spectral radius; Extremal graph; Girth; Circumference
\end{flushleft}
\textbf{AMS Classification:} 05C50; 05C35

\section{Introduction}

All graphs considered in this paper are simple, undirected and without isolated vertices.
Let $G$ be a graph with vertex set $V(G)$ and edge set $E(G)$.
The \emph{neighborhood} of a vertex $u\in V(G)$ is denoted by $N_{G}(u)$.
Let $N_G[u]:=N_G(u)\cup \{u\}$, which is called the \emph{closed neighborhood} of $u$.
As usual, $d_G(u)$ is the \emph{degree} of a vertex $u$ and $ \Delta(G)$ is the \emph{maximal degree} of $G$.
The \emph{average 2-degree} of a vertex $u$ is defined as $m_G(u)=\frac{1}{d_G(u)}\sum_{v\in N_G(u)}d_G(v)$.
We use $A(G)$, $D(G)$ and $Q(G)=A(G)+D(G)$ to denote the \emph{adjacency matrix},
\emph{degree diagonal matrix} and \emph{signless Laplacian matrix} of $G$, respectively.
The \emph{spectral radius} $\rho(G)$ and the \emph{signless Laplacian spectral radius} $q(G)$
are the largest moduli of eigenvalues of $A(G)$ and $Q(G)$, respectively.
From Perron-Frobenius theorem, there exists a non-negative unit eigenvector corresponding to $q(G)$,
which is called the \emph{Perron vector} of $Q(G)$.
Moreover, the Perron vector of $Q(G)$ is a positive vector for a connected graph $G$.

A graph $G$ is said to be \emph{$H$-free}, if $G$ does not contain $H$ as a subgraph.
A classic problem in extremal graph theory, known as Tur\'{a}n's problem,
asks what is the maximum number of edges in an $H$-free graph of order $n$?
Nikiforov \cite{Niki11} proposed a spectral version of Tur\'{a}n's problem as follows:
what is the maximum spectral radius of an $H$-free graph of order $n$?
This spectral Tur\'{a}n-type problem attracted much attention in the past decades
(see three surveys \cite{Chen18,Niki11,Li2022survey} and some recent results \cite{CB1,CB2,CLZ,MZ}).
In contrast, the spectral Tur\'{a}n-type problem for graphs with given size can be traced back to Nosal's\cite{Nosal1970} result in 1970,
which states that if $G$ is $C_3$-free then $\rho(G)\leq \sqrt{m}$.
This result was extended by Nikiforov, who proved in \cite{Niki02} that if $G$ is $K_{\omega+1}$-free then $\rho(G)\leq\sqrt{2m(1-1/\omega)}$, and completely characterized the equality in \cite{Niki06}.
In 2007, Bollob\'{a}s and Nikiforov\cite{Bollobas07} posed a stronger conjecture: if $G$ is $K_{\omega+1}$-free then $\lambda_1^2+\lambda_2^2\leq 2m(1-1/\omega) $, where  $\lambda_1$ and $\lambda_2$ are the first two largest eigenvalues of $A(G)$.
Lin, Ning and Wu\cite{Lin21cpc} confirmed Bollob\'{a}s-Nikiforov conjecture for $\omega=2$.
Li, Sun and Yu\cite{Li22} generalized this result by giving an upper bound of $\lambda_1^{2k}+\lambda_2^{2k}$ for $\{C_3,C_5,\ldots,C_{2k+1}\}$-free graphs.
Elphick, Linz and Wocjan\cite{El.Linz.2021+} conjectured that  $\lambda_1^2+\lambda_2^2+\cdots+\lambda_l^2 \leq 2m(1-1/\omega)$ for $K_{\omega+1}$-free graphs, where $l=min(n^+,\omega)$ and $n^+$ is the positive inertia index.

Recently, Gao and Hou \cite{Gao19} characterized the extremal graphs with maximal $\rho(G)$
over all graphs of order $n$ without cycles of length at least $k$.
Very recently, Li, Sun, Yu \cite{Li22} and Lin, Guo \cite{Lin21} independently determined the extremal graphs
with maximal $\rho(G)$ over all non-bipartite graphs of order $n$ without odd cycles of length at most $2k-1$.
In this paper, we consider a variation of above problems by replacing $\rho(G)$ with $q(G)$ and order with size,
that is, what is the maximum $q(G)$ over all graphs of fixed size without short cycles or long cycles?

The \emph{girth} and \emph{circumference} of a graph $G$ are the minimum and maximum lengths of cycles in $G$,
respectively.
We now introduce two families of graphs. For two positive integers $g,c$ with $\min\{g,c\}\geq3$,
let $\mathcal{G}(m,g)$ be the set of graphs on $m$ edges with girth $g$,
and $\mathcal{H}(m,c)$ be the set of graphs on $m$ edges with circumference $c$.
In this paper, we obtain the following two results.

\begin{thm}\label{thgirth}
Let $G_{m,g}$ be the graph obtained from a cycle $C_g$
by linking a vertex of the cycle to $m-g$ isolated vertices.
Then $q(G)\le q(G_{m,g})$ for every $G\in\mathcal{G}(m,g)$,
with equality if and only if $G\cong G_{m,g}$.
\end{thm}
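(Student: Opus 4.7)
The plan is to identify the extremal $G \in \mathcal{G}(m,g)$ with $G_{m,g}$ via a sequence of edge-relocation moves, each of which preserves membership in $\mathcal{G}(m,g)$ and strictly increases $q(G)$ unless $G$ is already $G_{m,g}$. Let $x$ be the Perron vector of $Q(G)$ for the extremal $G$, normalized to $\|x\|=1$, and let $u^* \in V(G)$ satisfy $x_{u^*} = \max_v x_v$. The two principal tools are the Rayleigh characterization
\[
q(G) = \max_{y \neq 0}\ \|y\|^{-2}\sum_{uv \in E(G)}(y_u + y_v)^2
\]
and the eigenequation $(q(G) - d(v))\,x_v = \sum_{u \sim v} x_u$. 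The elementary move is: given an edge $aw$ with $w$ a leaf of $G$, $a \neq u^*$, and $u^*w \notin E(G)$, delete $aw$ and add the pendant edge $u^*w$. Evaluating the Rayleigh quotient at $x$ for the new graph $G'$ yields
\[
q(G') - q(G) \;\geq\; (x_{u^*} - x_a)(x_{u^*} + x_a + 2x_w),
\]
which is strictly positive when $x_{u^*} > x_a$, and the move neither creates nor destroys any cycle (as $w$ remains a leaf), hence preserves the girth.

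I would execute the argument in four steps. First, $G$ is connected: for a component not containing $u^*$, one of its edges can be relocated to a pendant at $u^*$ (the endpoints of the removed edge have Perron entry zero in $x$, so the Rayleigh gain is at least $x_{u^*}^2 > 0$), and the removed edge is chosen so that a $C_g$-subgraph survives the relocation. Second, $G$ is unicyclic with its unique cycle of length $g$: if $G$ has two independent cycles, fix a $C_g$-subgraph $C$ and pick an edge $e \notin E(C)$; relocating $e$ as a pendant at $u^*$ preserves girth and strictly increases $q$. Third, every non-cycle vertex of the resulting $G=C_g\cup T$ is a leaf: if $a \notin V(C_g)$ has $d(a) \geq 2$, pick a leaf $w$ adjacent to $a$ and apply the elementary move on $aw$, which is strict when $x_{u^*} > x_a$. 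Fourth, all leaves attach at a single cycle vertex: conflicts between two candidate attachment points $a,b$ with $x_a \geq x_b$ are resolved by relocating pendants of $b$ onto $a$ via the same move. The terminal configuration is exactly $G_{m,g}$.

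The principal obstacle is the equality case $x_{u^*}=x_a$ in the elementary move, where Rayleigh only delivers $q(G')\geq q(G)$; strictness must be recovered from the eigenequation and the Perron-positivity of $x$ on connected $G$, ruling out such coincidences by a structural contradiction among the affected vertices. A secondary subtlety arises when $u^*$ is forced off the cycle in Step~3 (so the elementary move at $u^*$ itself is trivial); here the eigenequation at $u^*$ and its unique cycle neighbor, combined with a degree-plus-average-$2$-degree upper bound of the form $q(G)\leq d(u)+m_G(u)$ compared against the lower bound $q(G_{m,g})\geq \Delta(G_{m,g})+1=m-g+3$, yields a direct numerical contradiction for any $G\neq G_{m,g}$. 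The individual shifts are routine; the heart of the proof is the careful equality analysis.
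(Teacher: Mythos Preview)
Your Step~2 contains a genuine gap. When you ``relocate $e=ab$ as a pendant at $u^*$'' (delete $ab$ and add $u^*w$ for a new vertex $w$), the Rayleigh quotient at the old Perron vector $x$ (extended by $x_w=0$) gives only
\[
q(G')\ \geq\ q(G)+x_{u^*}^{2}-(x_a+x_b)^{2},
\]
and nothing prevents $x_a+x_b>x_{u^*}$ when $a,b$ are both non-leaves sitting on a second cycle. Your elementary-move inequality $(x_{u^*}-x_a)(x_{u^*}+x_a+2x_w)\geq 0$ is tailored to \emph{leaf} edges $aw$ and simply does not apply here. So you have not shown the extremal graph is unicyclic, and Steps~3--4 never get off the ground. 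There is also a smaller hole in Step~1: if the unique $C_g$ sits in a component not containing $u^*$ and that component is exactly $C_g$, any edge-relocation from it destroys the only $g$-cycle; the paper handles connectedness by identifying one vertex from each component instead.

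The paper bypasses the Step~2 obstruction by a two-sided argument you are missing. After first forcing $u_0$ onto a $g$-cycle $C^*$ (this itself uses a non-trivial edge-contraction lemma on internal paths: contracting an edge of a too-long cycle through $u_0$ strictly raises $q$ while preserving girth), it deletes \emph{all} non-$C^*$ edges not incident to $u_0$ at once, replacing them by fresh pendants at $u_0$, to land directly on $G''=G_{m,g}$. The sign of $q(G'')-q(G^*)$ is then read off from
\[
X^{T}Y\big(q(G'')-q(G^*)\big)=\sum_{u_0w_i\in E_2}(x_{u_0}+x_{w_i})(y_{u_0}+y_{w_i})-\sum_{uv\in E_1}(x_u+x_v)(y_u+y_v),
\]
where $Y$ is the Perron vector of $G''$. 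The key is that in $G''$ every vertex other than $u_0$ has degree at most $2$, which forces $y_u+y_v\leq\tfrac{1}{2}y_{u_0}$ for each deleted edge, while $x_u+x_v\leq 2x_{u_0}$ is trivial; the product beats $x_{u_0}y_{u_0}$ and yields strict increase. A one-sided Rayleigh estimate at $x$ alone cannot access this, because in the original $G^*$ the endpoints $u,v$ may carry large entries. This two-vector device, together with the contraction step placing $u_0$ on a shortest cycle, is the heart of the paper's proof and is absent from your outline.
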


\begin{thm}\label{thcir}
Let $H_{m,c}$ be the graph obtained from a cycle $C_c$ by linking a vertex of the cycle to $c-3$ vertices of $C_c$
and $m-2c+3$ isolated vertices. If $m\geq3c-4$, then $q(H)\le q(H_{m,c})$ for every $H\in\mathcal{H}(m,c)$,
with equality if and only if $H\cong H_{m,c}$.
\end{thm}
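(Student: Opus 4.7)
\medskip
\noindent\textbf{Proof plan.}

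The plan is to fix an extremal graph $H^{\ast}\in\mathcal H(m,c)$ attaining $\max_{H\in\mathcal H(m,c)}q(H)$, analyze its Perron vector $\mathbf x=(x_v)$ with $x_{u^{\ast}}=\max_v x_v$, and run a sequence of edge-shifting operations to force the structure of $H_{m,c}$. Throughout I would use the coordinate eigenequation $q(H^{\ast})x_v=d(v)x_v+\sum_{w\sim v}x_w$. A standard opening reduction is that $H^{\ast}$ is connected: if not, any edge in a non-$u^{\ast}$ component can be detached and reattached as a pendant at $u^{\ast}$; each such move does not decrease $q$ (since $u^{\ast}$ is Perron-maximal) and cannot enlarge the circumference beyond $c$, so $H^{\ast}$ can be taken connected.

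The core engine is a pair of edge-shift lemmas in the spirit of the Kelmans transformation adapted to $Q$. \textbf{(i)} If $v\neq u^{\ast}$ and $w\in N(v)\setminus N[u^{\ast}]$, then replacing $vw$ by $u^{\ast}w$ does not decrease $q$, because $x_{u^{\ast}}\ge x_v$. \textbf{(ii)} Pendants hanging off any vertex $v\neq u^{\ast}$ can be moved to $u^{\ast}$ in the same way. I would apply these transformations iteratively, each time checking that no cycle of length greater than $c$ is created. The circumference-preservation check is the delicate point of the plan: since the moved edges accumulate at $u^{\ast}$, one must argue that the new edge $u^{\ast}w$ can only sit on cycles of length at most $c$ (for instance, because the old edge $vw$ was either a cut-edge or internal to a structure whose cyclic closure through $u^{\ast}$ has length $\le c$).

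After the shifting, every vertex outside a fixed longest cycle $C=v_0v_1\cdots v_{c-1}v_0$ is a pendant of $u^{\ast}$. A short argument using the eigenequation forces $u^{\ast}\in V(C)$, so set $u^{\ast}=v_0$. The next claim is that $u^{\ast}$ is adjacent to every $v_i$ for $1\le i\le c-1$, and that there are no further edges among $v_1,\dots,v_{c-1}$. Adding a chord $u^{\ast}v_i$ to $C$ creates only two cycles, of lengths $i+1$ and $c-i+1$, both $\le c$, so the chord preserves the circumference and strictly increases $q$ unless already present. Conversely, any chord $v_iv_j$ with $i,j\neq 0$, combined with the paths $u^{\ast}\!\to\! v_i$ and $u^{\ast}\!\to\! v_j$ through $C$, would yield a cycle of length exceeding $c$, contradicting $H^{\ast}\in\mathcal H(m,c)$. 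Combining these structural statements gives $H^{\ast}\cong H_{m,c}$.

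The main obstacle, as noted, is verifying circumference-preservation in the shifting step, together with handling the initial configurations in which $H^{\ast}$ contains multiple (possibly overlapping) $c$-cycles. The hypothesis $m\ge 3c-4$, equivalent to $m-2c+3\ge c-1$ pendants at $u^{\ast}$ in $H_{m,c}$, should be used in two places: (a) to produce a strict lower bound on $x_{u^{\ast}}$ guaranteeing that each shift to $u^{\ast}$ strictly increases $q$ whenever the move is non-trivial, and (b) to rule out the only plausible competitor, namely a graph obtained by redistributing pendants so as to embed a denser substructure on the cycle. Carrying out (a) and (b) via explicit Perron-vector estimates, together with the circumference bookkeeping in the shifting lemmas, is the technical heart of the proof.
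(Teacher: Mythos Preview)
Your overall outline is natural, but there are two genuine gaps.

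\textbf{First, the circumference bookkeeping.} You already flag this as ``the delicate point,'' and in the class $\mathcal H(m,c)$ it is not just delicate but essentially intractable as stated: after you start shifting edges onto $u^{\ast}$, checking that no cycle of length $c{+}1$ appears in a graph that may already contain several overlapping $c$-cycles is hopeless on a move-by-move basis. The paper avoids this entirely by a relaxation trick you are missing: it works in the \emph{larger} family $\mathcal H(m,\ge c)$ of graphs with circumference at least $c$. In that family every edge-shift only has to preserve one fixed longest cycle $C^{\ast}$, which is trivial; a separate argument (using the Feng--Yu bound $q\le\max_u\{d(u)+m(u)\}$) then shows the extremal graph actually has $|C^{\ast}|=c$, so it lies in $\mathcal H(m,c)$ and is extremal there as well. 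Without this relaxation your shifting step does not go through.

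\textbf{Second, the chord-elimination step is wrong.} You assert that a chord $v_iv_j$ with $i,j\ne 0$, combined with paths from $u^{\ast}$ to $v_i$ and $v_j$ along $C$, ``would yield a cycle of length exceeding $c$.'' This is false: any cycle contained in the induced subgraph on $V(C)$ has at most $|V(C)|=c$ vertices, so its length is at most $c$. Chords among $v_1,\dots,v_{c-1}$ do \emph{not} violate the circumference constraint, and hence cannot be eliminated by that argument. The paper removes these extra chords by an entirely different, quantitative method: it moves each such chord to a new pendant at $u_0$, producing a graph $G$ with $q(G)>\Delta(G)+1=m-c+3$, and then bounds $q(G^{\ast})$ from above via $d(u)+m(u)$ case-by-case, using $m\ge 3c-4$ (and $c\ge 5$ when $|E_2'|\ge 2$) to conclude $q(G^{\ast})\le m-c+3<q(G)$. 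This is where the hypothesis $m\ge 3c-4$ is genuinely consumed, not merely to guarantee strictness of the Perron shifts.
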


\begin{center}\setlength{\unitlength}{0.2mm}\label{fig.1}
\begin{picture}(333.5,213.9)
\qbezier(114.9,83.4)(114.9,60.1)(98.5,43.6)\qbezier(98.5,43.6)(82.0,27.2)(58.7,27.2)\qbezier(58.7,27.2)(35.5,27.2)(19.0,43.6)
\qbezier(19.0,43.6)(2.5,60.1)(2.5,83.4)\qbezier(2.5,83.4)(2.5,106.6)(19.0,123.1)\qbezier(19.0,123.1)(35.5,139.6)(58.7,139.6)
\qbezier(58.7,139.6)(82.0,139.6)(98.5,123.1)\qbezier(98.5,123.1)(114.9,106.6)(114.9,83.4)
\put(58.7,140){\circle*{7}}
\put(28.3,188){\circle*{7}}
\qbezier(58.7,140.7)(43.5,163.9)(28.3,187.1)
\put(0.0,188){\circle*{7}}
\qbezier(58.7,140.7)(29.4,164.2)(0.0,187.8)
\put(117.5,188){\circle*{7}}
\qbezier(58.7,140.7)(88.1,163.9)(117.5,187.1)
\put(55,185){$...$}
\put(95.7,188){\circle*{7}}
\qbezier(58.7,140.7)(77.2,164.6)(95.7,188.5)
\put(14.5,117){\circle*{7}}
\put(103.7,117){\circle*{7}}
\put(7,64){\circle*{7}}
\put(110.9,64){\circle*{7}}
\put(53,42){$...$}
\put(47.9,200){\scriptsize$m-g$}
\put(53,87.7){$C_g$}
%\put(37.7,0){$G_{m,g}$}
\put(327.7,65.3){\circle*{7}}
\qbezier(330.9,84.1)(330.9,60.3)(314.0,43.4)\qbezier(314.0,43.4)(297.2,26.6)(273.3,26.6)\qbezier(273.3,26.6)(249.5,26.6)(232.6,43.4)
\qbezier(232.6,43.4)(215.8,60.3)(215.8,84.1)\qbezier(215.8,84.1)(215.8,107.9)(232.6,124.8)\qbezier(232.6,124.8)(249.5,141.6)(273.3,141.6)
\qbezier(273.3,141.6)(297.2,141.6)(314.0,124.8)\qbezier(314.0,124.8)(330.9,107.9)(330.9,84.1)
\put(273.3,139.9){\circle*{7}}
\put(221.1,188){\circle*{7}}
\qbezier(273.3,139.9)(247.2,163.1)(221.1,188)
\put(248.7,188){\circle*{7}}
\qbezier(273.3,139.9)(261.0,164.2)(248.7,188)
\put(309.6,188){\circle*{7}}
\qbezier(273.3,139.9)(291.5,164.6)(309.6,188)
\put(333.5,188){\circle*{7}}
\qbezier(273.3,139.9)(303.4,164.9)(333.5,188)
\put(222.6,109.5){\circle*{7}}
%\qbezier(273.3,139.9)(248.0,124.7)(222.6,109.5)
\put(325,109.5){\circle*{7}}
%\qbezier(273.3,139.9)(296.9,125.8)(325,109.5)
\put(220.4,65.3){\circle*{7}}
\qbezier(273.3,139.9)(246.9,103.0)(220.4,66.0)
\qbezier(273.3,139.9)(300.5,102.6)(327.7,65.3)
\put(241.4,37){\circle*{7}}
\qbezier(273.3,139.9)(257.4,88.8)(241.4,37.7)
\put(305.2,37.0){\circle*{7}}
\qbezier(273.3,139.9)(289.3,88.5)(305.2,37.0)
%\put(306.7,132.7){\circle*{6}}
%\put(243.6,133.4){\circle*{6}}
\put(269.0,42){$...$}
\put(270,185){$...$}
%\put(261.0,0.0){$H_{m,c}$}
\put(350,60){$C_c$}
\put(255,200){\scriptsize$m-2c+3$}
\put(90,0){\small Fig. 1: $G_{m,g}$ and $H_{m,c}$.}
\end{picture}
\end{center}

The rest of the paper is organized as follows.
In Section \ref{sec2}, we introduce some tools to study the signless Laplacian spectral radius,
which will be used in subsequent sections.
In Sections \ref{sec3} and \ref{sec4}, we give the proofs of Theorem \ref{thgirth} and Theorem \ref{thcir}, respectively.

\section{Preliminaries}\label{sec2}

The signless Laplacian matrix plays a very important role in spectral graph theory.
In this section, several lemmas on signless Laplacian spectral radius will be introduced.
For more results on signless Laplacian matrix, the readers can refer to three
surveys due to Cvetkovi\'{c} and Simi\'{c} (see \cite{CV1,CV2,CV3}).

The following lemmas concern an operation on edge switching.

\begin{lem}{(Hong and Zhang \cite{Hong05})}\label{leHong05}
Let $G$ be a connected graph, $X$ be a positive eigenvector of $Q(G)$ with $x_i$ corresponding to the vertex $i\in V(G)$,
and $\{v_1,\ldots,v_s\}\subseteq N_G(v)\setminus N_G(u)$
for some two vertices $u,v$ of $G$.
Let $G^*$ be the graph obtained from $G$ by deleting the edges $vv_i$
and adding the edges $uv_i$ for $1\leq i\leq s$.
If $x_{u}\geq x_{v}$, then $q(G^*)>q(G)$.
\end{lem}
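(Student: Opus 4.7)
The plan is to apply the Rayleigh-quotient characterization of $q$ together with a direct edge-swap computation. Recall $q(G)=\max_{Y\neq 0}\frac{Y^{T}Q(G)Y}{Y^{T}Y}$ and $Y^{T}Q(G)Y=\sum_{ij\in E(G)}(y_{i}+y_{j})^{2}$; moreover, because $G$ is connected, the Perron vector $X$ of $Q(G)$ is strictly positive by Perron-Frobenius. Testing $Q(G^{*})$ with $X$ gives $q(G^{*})\ge \frac{X^{T}Q(G^{*})X}{X^{T}X}$, while $q(G)=\frac{X^{T}Q(G)X}{X^{T}X}$, so it suffices to compare the two quadratic forms and then upgrade the comparison to a strict inequality.

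The edges that change between $G$ and $G^{*}$ are precisely the $s$ edges $vv_{i}$ (deleted) and the $s$ edges $uv_{i}$ (added). Consequently
\[
X^{T}Q(G^{*})X-X^{T}Q(G)X=\sum_{i=1}^{s}\bigl[(x_{u}+x_{v_{i}})^{2}-(x_{v}+x_{v_{i}})^{2}\bigr]=(x_{u}-x_{v})\sum_{i=1}^{s}(x_{u}+x_{v}+2x_{v_{i}}).
\]
Since $x_{u}\ge x_{v}$ and $X>0$, this quantity is $\ge 0$, which already yields the weak inequality $q(G^{*})\ge q(G)$. When $x_{u}>x_{v}$, every factor on the right is strictly positive, so the Rayleigh step by itself gives $q(G^{*})>q(G)$, and there is nothing more to do.

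The main obstacle is therefore the borderline case $x_{u}=x_{v}$, in which the quadratic-form difference vanishes and the test-vector argument is inconclusive. I plan to handle it by contradiction: assume $q(G^{*})=q(G)$. Then $X$ attains the maximum Rayleigh quotient for $Q(G^{*})$, so $X$ is an eigenvector of $Q(G^{*})$ for the eigenvalue $q(G)$, i.e.\ $Q(G^{*})X=q(G)X$. Subtracting the $v$-th coordinate of $Q(G^{*})X=qX$ from that of $Q(G)X=qX$, and using $d_{G}(v)-d_{G^{*}}(v)=s$ together with $N_{G}(v)\setminus N_{G^{*}}(v)=\{v_{1},\ldots,v_{s}\}$ and $N_{G^{*}}(v)\setminus N_{G}(v)=\emptyset$, I obtain $s\,x_{v}+\sum_{i=1}^{s}x_{v_{i}}=0$. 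This contradicts $X>0$ and $s\ge 1$, so the strict inequality $q(G^{*})>q(G)$ holds in the equal-coordinate case as well, completing the argument.
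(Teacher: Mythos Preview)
The paper does not supply its own proof of this lemma; it is quoted from Hong and Zhang \cite{Hong05} and used as a black box. Your argument is the standard Rayleigh-quotient proof and it is correct: the edge-swap computation gives the weak inequality, and in the borderline case $x_u=x_v$ your equality-forces-eigenvector step together with the $v$-coordinate comparison $s\,x_v+\sum_{i=1}^{s}x_{v_i}=0$ yields a clean contradiction to $X>0$. This is exactly how such edge-rotation lemmas are typically established for $Q(G)$, so nothing further is needed.
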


The following two lemmas give upper bounds on signless Laplacian spectral radius.

\begin{lem}{(Feng and Yu \cite{Feng09})}\label{le09}
Let $G$ be a connected graph. Then $q(G)\leq \max \{d_G(u)+m_G(u): u\in V(G)\}$,
with equality if and only if $G$ is either a semiregular bipartite graph or a regular graph.
\end{lem}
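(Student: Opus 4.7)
My plan is to prove the bound via a Collatz--Wielandt style inequality for the non-negative, irreducible matrix $Q(G)$ and to handle the equality case by a structural analysis. The natural test vector is the degree vector $d=(d_G(u))_{u\in V(G)}$, which is strictly positive since $G$ is connected and has no isolated vertices.

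A direct entrywise computation gives
$$
(Q(G)d)_u \;=\; d_G(u)^2 + \sum_{v\in N_G(u)} d_G(v) \;=\; d_G(u)\bigl(d_G(u)+m_G(u)\bigr),
$$
so $(Q(G)d)_u/d_u = d_G(u)+m_G(u)$. Because $Q(G)$ is nonnegative and irreducible (the underlying graph is connected and the diagonal entries are positive), the Perron--Frobenius theorem in Collatz--Wielandt form yields
$$
q(G)\;\le\;\max_{u\in V(G)}\frac{(Q(G)d)_u}{d_u}\;=\;\max_{u\in V(G)}\bigl(d_G(u)+m_G(u)\bigr),
$$
which is the desired inequality. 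Moreover, the same theorem tells us that equality forces $d$ to be a Perron eigenvector of $Q(G)$, equivalently, $d_G(u)+m_G(u)$ must take a constant value $c$ on $V(G)$.

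The technical heart of the proof, and what I expect to be the main obstacle, is translating the constancy $d_G(u)+m_G(u)\equiv c$ into the stated classification. I would pick a vertex $u$ of maximum degree $\Delta$; the identity $m_G(u)=c-\Delta$ combined with the fact that every neighbor has degree at most $\Delta$ gives $c\le 2\Delta$, with equality forcing every neighbor of $u$ to have degree exactly $\Delta$. In this equality subcase, iterating the argument through the connected graph $G$ shows $G$ is $\Delta$-regular. In the strict subcase $c<2\Delta$, the symmetric argument at a minimum-degree vertex of degree $\delta$ forces $c\ge 2\delta$, and $c=2\delta$ would force $\delta$-regularity (hence $\Delta=\delta$), a contradiction. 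The delicate remaining step is to rule out intermediate degrees and show that every edge joins a $\Delta$-vertex to a $\delta$-vertex; for this I would use the identity $\sum_{v\sim u}d_G(v)=d_G(u)\bigl(c-d_G(u)\bigr)$ at vertices of each degree class, exploiting that at a $\Delta$-vertex the average of neighbor degrees is $c-\Delta$ and at a $\delta$-vertex it is $c-\delta$, to force $c=\Delta+\delta$ and a bipartition. The converse directions are immediate: for a $k$-regular graph $q(G)=2k$, and for an $(r,s)$-semiregular bipartite graph the vector assigning $r$ to one side and $s$ to the other is a positive eigenvector of $Q(G)$ with eigenvalue $r+s$, realising equality.
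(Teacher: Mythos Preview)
The paper does not prove this lemma; it is quoted from Feng and Yu \cite{Feng09} without proof, so there is no ``paper's own proof'' to compare against. I will therefore just assess your argument.

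Your inequality argument is correct and is the standard one: with the positive test vector $d=(d_G(u))_u$ one has $(Q(G)d)_u/d_u=d_G(u)+m_G(u)$, and the Collatz--Wielandt bound for the irreducible nonnegative matrix $Q(G)$ gives the result; equality forces $d$ to be a Perron eigenvector, i.e.\ $d_G(u)+m_G(u)\equiv c$.

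Your sketch of the equality case is on the right track and can be completed exactly along the lines you indicate. From a vertex of maximum degree $\Delta$ the identity gives average neighbour degree $c-\Delta\ge\delta$, hence $c\ge\Delta+\delta$; from a vertex of minimum degree $\delta$ one gets $c-\delta\le\Delta$, hence $c\le\Delta+\delta$. Thus $c=\Delta+\delta$. Then at any $\Delta$-vertex the average neighbour degree equals $\delta$, so \emph{every} neighbour has degree exactly $\delta$; symmetrically every neighbour of a $\delta$-vertex has degree $\Delta$. Connectivity now forces $V(G)=S_\Delta\cup S_\delta$ with all edges between the two parts (if $\Delta\neq\delta$), i.e.\ $G$ is semiregular bipartite; if $\Delta=\delta$ the graph is regular. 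So your ``delicate remaining step'' is not actually delicate once $c=\Delta+\delta$ is in hand.

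A slicker alternative for the equality case, which you may wish to note: from $Q\mathbf{1}=2d$ and $Qd=cd$ one gets $Q(c\mathbf{1}-2d)=0$. For connected $G$ the kernel of $Q$ is nonzero iff $G$ is bipartite, and then it is spanned by the $\pm1$ bipartition vector. Hence either $c\mathbf{1}-2d=0$ (regular) or $G$ is bipartite with $d$ constant on each side (semiregular bipartite). This bypasses the extremal-degree bookkeeping entirely.
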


\begin{lem}{(Zhai, Xue and Lou \cite{Zhai20})}\label{le20}
 Let $G$ be a graph with clique number $\omega$ and size $m$.
 Then $q(G)\leq q(K_\omega^{m-s})$, with equality if and only if $G\cong K_\omega^{m-s}$,
 where $s={\omega\choose 2}$ and $K_\omega^{m-s}$ is obtained from a complete graph $K_\omega$
 by linking $m-s$ edges to a vertex of $K_\omega$.
\end{lem}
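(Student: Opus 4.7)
Let $G$ attain the maximum of $q$ among graphs with clique number $\omega$ and $m$ edges. We may assume $G$ is connected, since transferring edges from lighter components to become pendants at the Perron-maximum vertex of the heavy component stays in the class and does not decrease $q$. Let $X$ be the Perron vector of $Q(G)$ and let $u^*$ be a vertex with $x_{u^*}=\max_v x_v$; the target is $G\cong K_\omega^{m-s}$.

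The strategy is a two-stage reduction via Lemma~\ref{leHong05}. In the first stage we show $N_G(v)\subseteq N_G[u^*]$ for every $v\ne u^*$: if some $v$ violates this with $A:=N_G(v)\setminus N_G[u^*]\ne\emptyset$, the switch that deletes every $va$ ($a\in A$) and adds every $u^*a$ produces $G^*$ with $q(G^*)>q(G)$ by Lemma~\ref{leHong05}. To contradict extremality we must verify $\omega(G^*)\le\omega$: any $K_{\omega+1}$ in $G^*$ must use a new edge $u^*a$, hence consists of $u^*,a$ and a $K_{\omega-1}$ set $T$ inside $N_{G^*}(u^*)\cap N_{G^*}(a)$. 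Edge-bookkeeping places this common neighborhood inside $N_G(a)\setminus\{v\}$; in the configurations that survive the check, $T\subseteq A\subseteq N_G(v)$, so adjoining $v$ to the $K_\omega$ $\{a\}\cup T$ yields a $K_{\omega+1}$ already in $G$ — contradicting $\omega(G)=\omega$. A second stage of switching applied to each non-neighbor $w$ of $u^*$ (move one of its edges onto $u^*$) then forces $V(G)=N_G[u^*]$ by the same descent argument.

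After these reductions $G$ is supported on $N_G[u^*]$ and $G[N_G(u^*)]$ has clique number $\omega-1$; let $K'=\{v_1,\dots,v_{\omega-1}\}$ be a maximum clique inside. A final round of switching — this time between pairs of vertices within $N_G(u^*)$ using their Perron entries — peels every remaining edge of $G[N_G(u^*)]$ lying outside $K'$ into a pendant at $u^*$, which is clique-safe because pendants cannot extend cliques. Strict monotonicity of $q$ at each step collapses $G$ to $K_\omega^{m-s}$ and also yields uniqueness. The principal obstacle throughout is the clique-preservation bookkeeping: Lemma~\ref{leHong05} does not natively respect $K_{\omega+1}$-freeness, so every switch must be accompanied by a descent argument showing that a hypothetical new $K_{\omega+1}$ already lived in $G$. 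The final switches inside $N_G(u^*)$ are the most delicate, since $u^*$ is already joined to everything there and the lemma has to be invoked on non-$u^*$ pairs with a refined Perron comparison to drive the graph down to $K_\omega^{m-s}$.
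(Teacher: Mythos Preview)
The paper does not prove this lemma; it is quoted from \cite{Zhai20} without proof, so there is no in-paper argument to compare your proposal against. I can therefore only assess the internal soundness of your sketch.

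There is a genuine gap in the clique-preservation step of Stage~1. You assert that if a $K_{\omega+1}$ appears in $G^*$ after the switch $va\mapsto u^*a$ (for all $a\in A=N_G(v)\setminus N_G[u^*]$), then the remaining $\omega-1$ vertices $T$ of that clique satisfy $T\subseteq A\subseteq N_G(v)$, whence $\{v,a\}\cup T$ was already a $K_{\omega+1}$ in $G$. But $T$ need only lie in $N_{G^*}(u^*)\cap N_{G^*}(a)=\bigl(N_G(u^*)\cup A\bigr)\cap\bigl(N_G(a)\setminus\{v\}\bigr)$, which can sit entirely inside $N_G(u^*)$ and miss $A$ altogether. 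Concretely, take $\omega=3$ and let $G$ contain two triangles $\{u^*,t_1,t_2\}$ and $\{a,t_1,t_2\}$ sharing the edge $t_1t_2$, with $u^*a\notin E(G)$ and $va\in E(G)$ (and enough pendants at $u^*$ to make $x_{u^*}$ maximal). Here $a\in A$, the switch adds $u^*a$, and $\{u^*,a,t_1,t_2\}$ becomes a $K_4$ in $G^*$ even though $G$ was $K_4$-free. So Lemma~\ref{leHong05} can raise the clique number and your descent breaks.

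The final stage is also underspecified. Lemma~\ref{leHong05} transfers neighbors from one existing vertex to another existing \emph{non-neighbor}; it cannot ``peel an edge into a pendant at $u^*$'' once $u^*$ is adjacent to every vertex of $N_G(u^*)$, because there is no legal target for the switch. Replacing an internal edge by a new pendant vertex is a different operation whose effect on $q$ requires a separate argument (for instance a direct Rayleigh-quotient comparison), which you have not supplied.
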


Let $k\geq2$. A walk $u_1u_2\ldots u_k$ in a graph $G$ is called an \emph{internal path},
if these $k$ vertices are distinct (except possibly $u_1=u_k$), $\min\{d_G(u_1),d_G(u_k)\}\geq3$ and
$d_G(u_2)=\cdots =d_G(u_{k-1})=2$ (unless $k=2$).
The following lemma concerns an operation on subdividing edges.

\begin{lem}{(Feng, Li and Zhang \cite{LiQF07})}\label{leLiQF07}
Let $G$ be a connected graph and $uv$ be a cut edge on an internal path of $G$.
If we subdivide $uv$, that is, add a new vertex $w$ and substitute $uv$ by a path $uwv$,
and denote the new graph by $G_{uv}$, then $q(G_{uv})<q(G)$.
\end{lem}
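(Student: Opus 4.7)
The plan is to use the Perron eigenvector of $Q(G_{uv})$ to build a test vector on $V(G)$ whose Rayleigh quotient for $Q(G)$ strictly exceeds $q(G_{uv})$, thereby forcing $q(G)>q(G_{uv})$. Let $X=(x_z)$ be the positive Perron vector of $Q(G_{uv})$, set $q'=q(G_{uv})$, and let $w$ be the new subdivision vertex. Evaluating the eigenvalue equation at $w$ gives $x_w=(x_u+x_v)/(q'-2)$. Because $uv$ is a cut edge of $G$, the graph $G-uv$ splits into components $G_1\ni u$ and $G_2\ni v$, which coincide with the two components of $G_{uv}-w$.

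First I would define a test vector $Y$ on $V(G)$ by $Y_z=x_z$ for $z\in V(G_1)$ and $Y_z=tx_z$ for $z\in V(G_2)$, with $t>0$ to be chosen. Using the eigenvalue identity for $Q(G_{uv})$ restricted to $G_i$, a short computation gives $X_i^{T}Q(G_i)X_i=q'\sum_{z\in V(G_i)}x_z^{2}-x_{p_i}x_w$, where $X_i$ is the restriction of $X$ to $V(G_i)$, and $p_1=u$, $p_2=v$. Combining with the edge $uv$ of $G$, which contributes $2tx_ux_v$, yields
\[
Y^{T}Q(G)Y-q'\,Y^{T}Y=-x_ux_w-t^{2}x_vx_w+2tx_ux_v.
\]
Maximizing the right-hand side over $t$ at $t^{*}=x_u/x_w$, the sign is controlled by $x_ux_v-x_w^{2}$; equivalently, I am reduced to verifying $(q'-2)^{2}x_ux_v>(x_u+x_v)^{2}$.

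To secure this inequality I would exploit the internal-path hypothesis. Along the path $u_1u_2\cdots u_k$, the Perron values of the degree-$2$ vertices satisfy the Chebyshev-type recurrence $x_{u_{j+1}}=(q'-2)x_{u_j}-x_{u_{j-1}}$, with boundary conditions at $u_1,u_k$ forced by the degree-at-least-$3$ condition. These boundary contributions bound $q'$ away from $2$ and constrain the ratio $x_u/x_v$ on the two sides of the cut edge, from which the required quadratic inequality can be extracted.

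The hard part will be precisely this final step. The bare Rayleigh estimate already yields $q(G)>q(G_{uv})$ when $q'>4$, since in that regime $(q'-2)^{2}>4\ge (x_u+x_v)^{2}/(x_ux_v)$ follows from AM--GM; but the small-$q'$ regime is not automatic, and must be handled by extracting a quantitative lower bound on $q'-2$ from the degree-at-least-$3$ endpoints $u_1,u_k$ of the internal path. Once $x_ux_v>x_w^{2}$ is in hand, the optimized Rayleigh quotient strictly exceeds $q'$ and the lemma follows.
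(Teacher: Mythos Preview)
The paper does not prove this lemma; it is quoted verbatim from Feng, Li and Zhang \cite{LiQF07} and used as a black box (see Section~\ref{sec2}). There is therefore no ``paper's own proof'' to compare your attempt against.

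Regarding your argument itself: the computation down to
\[
Y^{T}Q(G)Y-q'\,Y^{T}Y=-x_ux_w-t^{2}x_vx_w+2tx_ux_v
\]
is correct (your intermediate formula $X_i^{T}Q(G_i)X_i=q'\|X_i\|^{2}-x_{p_i}x_w$ is actually off by a term $-x_{p_i}^{2}$, but this cancels against the cross term in $(x_u+tx_v)^{2}$, so the displayed identity survives). The reduction to the requirement $x_ux_v>x_w^{2}$, equivalently $(q'-2)^{2}x_ux_v>(x_u+x_v)^{2}$, is also correct. However, your ``easy case'' is a genuine error: AM--GM gives $(x_u+x_v)^{2}/(x_ux_v)\ge 4$, not $\le 4$, so knowing $q'>4$ (hence $(q'-2)^{2}>4$) does \emph{not} yield the inequality. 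In fact the opposite regime is the dangerous one: if the two sides of the cut edge are very unbalanced, say $d_G(u)=3$ while $d_G(v)=n+1$ is large, then a direct computation with the eigen-equations shows $x_u/x_v$ is of order $1/n^{2}$, so $(x_u+x_v)^{2}/(x_ux_v)$ grows like $n^{2}$, which is the same order as $(q'-2)^{2}$; the desired inequality then holds only by a thin margin and certainly not from $q'>4$ alone.

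Consequently, what you label the ``hard part'' is in fact the entire content of the proof, and your sketch (``the Chebyshev recurrence and the degree-$\ge 3$ boundary conditions constrain the ratio $x_u/x_v$'') is only a gesture: you have not shown how to turn those constraints into the precise quadratic inequality $(q'-2)^{2}x_ux_v>(x_u+x_v)^{2}$. Until that step is carried out, the argument is incomplete.
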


Let $Y$ be a real vector.
We denote $Y>\mathbf{0}$, if each coordinate of $Y$ is non-negative and at least one is positive.

\begin{lem}{(Berman and Plemmons \cite{AB})}\label{lem1}
Let $M$ be a non-negative irreducible square matrix with spectral
radius $\lambda(M)$. If there exists a positive vector $Y$ such that
$\alpha Y<MY<\beta Y$, then $\alpha<\lambda(M)<\beta$.
\end{lem}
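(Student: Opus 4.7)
My plan is to reduce the componentwise vector inequality $\alpha Y < MY < \beta Y$ to a scalar comparison by pairing both sides with a left Perron vector of $M$; this is a standard Collatz--Wielandt style argument.

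First, I would invoke the Perron--Frobenius theorem for non-negative irreducible matrices to produce a strictly positive row vector $Z^{T}$ satisfying $Z^{T} M = \lambda(M)\, Z^{T}$. The crucial feature of $Z$ for what follows is that every coordinate of $Z$ is strictly positive, so that taking an inner product against $Z$ preserves both sign and strictness of componentwise inequalities.

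Next, I would left-multiply the hypothesis $\alpha Y < MY < \beta Y$ by $Z^{T}$. Using $Z^{T} M = \lambda(M)\, Z^{T}$, this gives
\[
\alpha\, Z^{T} Y \;<\; \lambda(M)\, Z^{T} Y \;<\; \beta\, Z^{T} Y.
\]
Since $Y$ is a positive vector and all entries of $Z$ are strictly positive, the scalar $Z^{T} Y$ is a strictly positive real number, and dividing through yields $\alpha < \lambda(M) < \beta$.

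The one point that deserves care is that the strict vector inequalities really do translate into strict scalar inequalities. Following the convention recalled immediately above the lemma, $\alpha Y < MY$ means $MY - \alpha Y > \mathbf{0}$, i.e.\ the vector $MY - \alpha Y$ has non-negative entries with at least one strictly positive entry. Because $Z$ is componentwise strictly positive, $Z^{T}(MY - \alpha Y)$ is then strictly positive, and the analogous statement holds on the other side. So the whole argument reduces to the existence of a strictly positive left Perron eigenvector, which is the substantive input from Perron--Frobenius; there is no further obstacle.
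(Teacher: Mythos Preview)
Your argument is correct and is the standard Collatz--Wielandt approach: pair the componentwise inequality against a strictly positive left Perron eigenvector and cancel the positive scalar $Z^{T}Y$. The paper itself does not prove this lemma at all; it simply quotes the result from Berman and Plemmons, so there is nothing to compare against beyond noting that your proof is the expected one.
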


With the help of Lemmas \ref{leLiQF07} and \ref{lem1}, we obtain the following result by replacing edge subdivision with edge contraction.

\begin{lem}\label{lem2}
Let $G$ be a connected graph
and $uv$ be an edge on an internal path of $G$ with $N_G(u)\cap N_G(v)=\varnothing$.
If we contract $uv$, that is, delete $uv$ and identify $u,v$ as a new vertex $u^*$,
and denote the new graph by $G^{uv}$, then $q(G^{uv})>q(G)$.
\end{lem}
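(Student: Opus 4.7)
The plan is to lift the Perron vector of $Q(G)$ to a test vector on $V(G^{uv})$ and then invoke Lemma \ref{lem1} (or the Perron--Frobenius strict-monotonicity that underlies it). Let $X$ be the Perron vector of $Q(G)$, so $X>0$ and $Q(G)X=qX$ with $q:=q(G)$. Define $Y$ on $V(G^{uv})$ by $y_w=x_w$ for $w\ne u^*$ and $y_{u^*}=x_u+x_v$; then $Y>0$, and $Q(G^{uv})$ is non-negative and, since $G^{uv}$ is connected, irreducible. It therefore suffices to show $Q(G^{uv})Y\ge qY$ coordinatewise with at least one strict inequality.

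The verification at each $w\in V(G^{uv})\setminus\{u^*\}$ is direct, and this is where the assumption $N_G(u)\cap N_G(v)=\varnothing$ is used: it guarantees that every such $w$ is adjacent to at most one of $u,v$ in $G$, so no multi-edge arises at $u^*$ and $d_{G^{uv}}(w)=d_G(w)$. If $w\notin N_G(u)\cup N_G(v)$ the local structure is identical and $(Q(G^{uv})Y)_w=q\,y_w$. If $w\in N_G(u)\setminus\{v\}$, substituting the Perron equation of $X$ at $w$ gives $(Q(G^{uv})Y)_w-q\,y_w=y_{u^*}-x_u=x_v>0$, and the symmetric positive surplus $x_u$ appears for $w\in N_G(v)\setminus\{u\}$.

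The decisive coordinate is $u^*$ itself. Writing out $(Q(G^{uv})Y)_{u^*}$ and eliminating the sums $\sum_{z\in N_G(u)\setminus\{v\}}x_z$ and $\sum_{z\in N_G(v)\setminus\{u\}}x_z$ via the Perron equations of $X$ at $u$ and at $v$, a short calculation collapses the difference at $u^*$ to
\[
(Q(G^{uv})Y)_{u^*}-q\,y_{u^*}=(d_G(v)-3)\,x_u+(d_G(u)-3)\,x_v.
\]
When $\min\{d_G(u),d_G(v)\}\ge 3$ this quantity is non-negative; combined with the strict surplus at the neighbors of $u^*$, we conclude $q(G^{uv})>q$.

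The remaining case $\min\{d_G(u),d_G(v)\}=2$ is the main obstacle, since the scalar at $u^*$ may then be negative. I would handle it via Lemma \ref{leLiQF07}: assume without loss of generality $d_G(u)=2$, and let $a$ be the unique neighbor of $u$ other than $v$. Then $G$ is precisely the graph obtained from $G^{uv}$ by subdividing the edge $u^*a$, which sits on the internal path of $G^{uv}$ inherited from the internal path of $G$ through $uv$ (contraction preserves the degrees of all vertices distinct from $u,v$, and if $v$ is itself a degree-$\ge 3$ endpoint of that internal path then $u^*$ inherits degree $d_G(v)$). Lemma \ref{leLiQF07} then gives $q(G)<q(G^{uv})$, finishing the proof.
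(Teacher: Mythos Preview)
Your proof is correct. In the degree-$2$ case you do exactly what the paper does: observe that $G$ is a subdivision of $G^{uv}$ and quote Lemma~\ref{leLiQF07}. In the main case $\min\{d_G(u),d_G(v)\}\ge 3$, however, you take a genuinely different route from the paper.

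The paper works in the \emph{opposite} direction: it fixes the Perron vector $X$ of $Q(G^{uv})$ and manufactures a rather intricate test vector $Y$ on $V(G)$, with
\[
y_u=\tfrac1p\Big(\sum x_{v_i}+(q-t-1)\sum x_{u_i}\Big),\qquad
y_v=\tfrac1p\Big(\sum x_{u_i}+(q-s-1)\sum x_{v_i}\Big),
\]
where $p=(q-s-1)(q-t-1)-1$, and then checks $Q(G)Y<q(G^{uv})Y$ coordinatewise. This requires separate verifications at $u$, at $v$, at each $u_i$ and $v_i$, together with the auxiliary estimates $q\ge s+t+1$ and $p>(q-s-t)(q-t-1)$ to secure positivity of $Y$ and the sign of $y_u-x_{u^*}$.

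Your choice to lift the Perron vector of $Q(G)$ to $G^{uv}$ via the natural rule $y_{u^*}=x_u+x_v$ collapses the whole computation: the neighbour coordinates give an immediate strict surplus $x_v$ (resp.\ $x_u$), and the single nontrivial identity $(Q(G^{uv})Y)_{u^*}-q\,y_{u^*}=(d_G(v)-3)x_u+(d_G(u)-3)x_v$ falls out of the two Perron equations at $u$ and $v$. No auxiliary constant $p$, no positivity check, and no estimate on $q$ is needed. The price is that the $u^*$-coordinate is only non-negative (not strictly positive) when $d_G(u)=d_G(v)=3$, but Lemma~\ref{lem1}---as defined in the paper, where ``$>\mathbf 0$'' means non-negative with at least one strict entry---accommodates this, the strictness being supplied by the neighbours of $u^*$. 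In short, both arguments are valid applications of Lemma~\ref{lem1}; yours is appreciably shorter because the ``merge'' $x_u+x_v\mapsto y_{u^*}$ is canonical, whereas the paper's ``split'' $x_{u^*}\mapsto (y_u,y_v)$ has a free parameter that must be tuned.
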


\begin{proof}
If $d_G(u)=2$ or $d_G(v)=2$, then $G$ can be seen as a subdivision of $G^{uv}$,
and the result follows from Lemma \ref{leLiQF07}.
Next, assume that $\min\{d_G(u),d_G(v)\}\geq 3$.

Let $N_G(u)\setminus \{v\}=\{u_1,\ldots,u_s\}$
and $N_G(v)\setminus \{u\}=\{v_1,\ldots,v_t\}$,
where $\min\{s,t\}\geq2$.
To apply Lemma \ref{lem1}, we need to find a positive vector $Y$ such that $Q(G)Y<q(G^{uv})Y$. Let $X$ be the Perron vector of $Q(G^{uv})$, and $Y$ be a  vector defined as
\begin{align*}
 y_w=\left\{
                                       \begin{array}{ll}
                                       \frac 1p\big(\sum_{i=1}^tx_{v_i}+(q-t-1)\sum_{i=1}^sx_{u_i}\big), & \hbox{$w=u$,} \\
                                       \frac 1p\big(\sum_{i=1}^sx_{u_i}+(q-s-1)\sum_{i=1}^tx_{v_i}\big), & \hbox{$w=v$,}\\
                                       x_w, &  \hbox{$w\in V(G)\setminus \{u,v\}$,}
                                       \end{array}
                                     \right.
\end{align*}
where $q=q(G^{uv})$ and $p=(q-t-1)(q-s-1)-1$. Then we have
\begin{align*}
(Q(G)Y)_u&=\sum_{i=1}^sy_{u_i}+y_v+(s+1)y_u\\
&=\sum_{i=1}^sx_{u_i}\!+\!\frac 1p\Big(\sum_{i=1}^sx_{u_i}\!+\!(q\!-\!s\!-\!1)\sum_{i=1}^tx_{v_i}\Big)\!+\!\frac{s+1} {p}\Big(\sum_{i=1}^tx_{v_i}+(q-t-1)\sum_{i=1}^sx_{u_i}\Big)\\
&=qy_u,
\end{align*}
and we can similarly obtain that $(Q(G)Y)_v=qy_v$.

For each vertex $w\in V(G)\setminus(N_G(u)\cup N_G(v))$, we have $y_w=x_w$.
Thus,
\begin{align*}
(Q(G)Y)_w&=\sum_{z\in N_G(w)}y_z+d_G(w)y_w=\sum_{z\in N_{G^{uv}}(w)}x_z+d_{G^{uv}}(w)x_w=qx_w=qy_w.
\end{align*}
Since $X$ is an eigenvector of $G^{uv}$ corresponding to $q(G^{uv})$, we obtain
 \begin{align*}
(q-s-t)x_{u^*}=\sum_{i=1}^sx_{u_i}+\sum_{i=1}^tx_{v_i}.
\end{align*}

Note that $G^{uv}$ contains $K_{1,s+t}$ as a subgraph, we have $q\geq s+t+1$, and hence $Y$ is a positive vector.
Moreover, recall that $\min\{s,t\}\geq2$, it follows that
 \begin{align*}
p&=(q-s-1)(q-t-1)-1\\
&=(q-s-t)(q-t-1)+(t-1)(q-t-1)-1\\
&>(q-s-t)(q-t-1).
\end{align*}
Then we have
 \begin{align*}
y_u-x_{u^*}&=\Big(\frac{q-t-1}p-\frac1{q-s-t}\Big)\sum_{i=1}^sx_{u_i}+\Big(\frac1p-\frac1{q-s-t}\Big)\sum_{i=1}^tx_{v_i}\\
&<\Big(\frac1{(q-s-t)(q-t-1)}-\frac1{q-s-t}\Big)\sum_{i=1}^tx_{v_i}\\
&<0.
\end{align*}
Thus, for each $u_i~(i=1,\ldots,s)$, we have
  \begin{align*}
(Q(G)Y)_{u_i}&=d_{G}(u_i)y_{u_i}+y_u+\sum_{w\in N_G(u_i)\setminus\{u\}}y_w< d_{G^{uv}}(u_i)x_{u_i}+x_{u^*}+\sum_{w\in N_{G^{uv}}(u_i)\setminus\{u\}}x_w=qx_{u_i}=qy_{u_i}.
\end{align*}
By symmetry, $y_v< x_{u^*}$ and $(Q(G)Y)_{v_i}< qy_{v_i}$ for each $v_i~(i=1,\ldots,t)$.

Based on the above analyses, we obtain $Q(G)Y < qY.$
It follows from Lemma \ref{lem1} that $q(G)<q=q(G^{uv}).$
\end{proof}

\section{Proof of Theorem \ref{thgirth}}\label{sec3}

For convenience, we use $|G|$ and $e(G)$ to denote the numbers of vertices and edges of a graph $G$, respectively.
Let $G^*$ denote an extremal graph with maximal signless Laplacian spectral radius in $\mathcal{G}(m,g)$
and $X$ be the Perron vector of $Q(G^*)$ with coordinate $x_v$ corresponding to $v\in V(G)$.
Now we give the proof of Theorem \ref{thgirth}.

\vspace{2mm}
\noindent{\bf Proof of Theorem \ref{thgirth}}

First, we consider the case $g=3$.
By Lemma \ref{le20}, we see that $q(G)\leq q(K_\omega^{m-{\omega\choose 2}})$
for every graph $G$ of size $m$ with clique number $\omega$.
Moreover, if $\omega\geq3$,
then $q(K_\omega^{m-{\omega\choose 2}})$ is strictly decreasing on $\omega$
(see \cite{Zhai20}, Lemma 2.6).
This implies that $K_3^{m-3}$ attains uniquely the maximum signless Laplacian spectral radius
among all graphs of fixed size $m$ with clique number $\omega\geq3$.
Note that $K_3^{m-3}\cong G_{m,3}$ and every graph $G\in\mathcal{G}(m,3)$ has clique number $\omega\geq3$.
It follows that $q(G)\leq q(K_3^{m-3})$ for every $G\in\mathcal{G}(m,3)$,
with equality if and only if $G\cong G_{m,3}$.

\vspace{1mm}
In the following we assume that $g\geq4$. We shall show that $G^*\cong G_{m,g}$.
The proof is divided into five claims.

\begin{claim}\label{cl1}
$G^{*}$ is connected.
\end{claim}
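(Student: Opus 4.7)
The plan is to argue by contradiction: suppose $G^*$ is disconnected, and construct $G'\in\mathcal{G}(m,g)$ with $q(G')>q(G^*)$, contradicting extremality.

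Pick a component $G_1$ of $G^*$ achieving $q(G_1)=q(G^*)$ with positive Perron vector $X_1$, extended by zero on the remaining vertices of $G^*$. Choose any $u\in V(G_1)$, any other component $G_2$ of $G^*$, and any $w\in V(G_2)$; since $G^*$ has no isolated vertex, $d_{G_2}(w)\ge 1$ and $x_u>0$. Build $G'$ by identifying $u$ and $w$ into a single vertex $u^*$. Because $u$ and $w$ lie in distinct components, $N_{G_1}(u)\cap N_{G_2}(w)=\emptyset$, so $G'$ is a simple graph on exactly $m$ edges with no isolated vertex. Moreover, since $V(G_1)\setminus\{u\}$ and $V(G_2)\setminus\{w\}$ remain in different components of $G'-u^*$, any cycle of $G'$ is a relabeling of a cycle in some $G_i$: it either avoids $u^*$ (and lives entirely inside an untouched component) or passes through $u^*$, in which case its two $u^*$-incident edges must come from the same component $G_i$ and the cycle stays inside $V(G_i)\cup\{u^*\}$. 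Hence $\mathrm{girth}(G')=\mathrm{girth}(G^*)=g$ and $G'\in\mathcal{G}(m,g)$.

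For the spectral comparison, apply the Rayleigh principle to the test vector $Z$ on $V(G')$ defined by $z_v=x_v$ for $v\in V(G_1)\setminus\{u\}$, $z_{u^*}=x_u$, and $z_v=0$ on the remaining vertices. A direct calculation yields
\[
Z^{\top}Q(G')Z=q(G^*)\|X_1\|^2+d_{G_2}(w)\,x_u^2\quad\text{and}\quad \|Z\|^2=\|X_1\|^2,
\]
where the extra term records the $d_{G_2}(w)$ edges at $w$ in $G_2$ (whose $w$-end is now $u^*$, while the opposite end carries value $0$). Therefore
\[
q(G')\ge \frac{Z^{\top}Q(G')Z}{\|Z\|^2}=q(G^*)+\frac{d_{G_2}(w)\,x_u^2}{\|X_1\|^2}>q(G^*),
\]
which is the desired contradiction. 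The one conceptual obstacle is the girth-preservation step: once one checks that identifying two vertices from different components cannot introduce a cycle shorter than the original girth, the spectral inequality follows from a one-line Rayleigh computation.
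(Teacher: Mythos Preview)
Your proof is correct and follows essentially the same route as the paper: glue two components together by identifying a vertex in each, check that no new (shorter) cycles are created so the girth is preserved, and then observe that the signless Laplacian spectral radius strictly increases. The only cosmetic differences are that the paper identifies one vertex from \emph{each} of the $k$ components simultaneously and then appeals directly to the fact that $G_{i_0}$ is a proper connected subgraph of the resulting connected graph (hence $q$ strictly increases by Perron--Frobenius), whereas you merge only two components and give an explicit Rayleigh-quotient computation with a test vector supported on $G_1$; your computation is precisely the quantitative form of the paper's ``proper subgraph'' step.
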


\begin{proof}
Recall that throughout the paper we investigate graphs without isolated vertices.
Suppose that $G^*$ is not connected and it consists of $k$ components $G_1,G_2,\ldots,G_k$.
Then $q(G^*)=q(G_{i_0})$ for some $i_0\in\{1,2,\ldots,k\}$.
Now, select a vertex $u_i\in V(G_i)$ for each $i\in\{1,2,\ldots,k\}$, and
let $G$ be the graph obtained from $G^*$ by identifying $u_1,u_2,\ldots,u_k$.
Then $G\in\mathcal{G}(m,g)$.
Moreover, $G_{i_0}$ is a proper subgraph of $G$, and so $q(G)>q(G_{i_0})=q(G^*).$
This contradicts the choice of $G^*$.
Therefore, $G^{*}$ is connected.
\end{proof}

\begin{claim}\label{cl2}
Let $u_0\in V(G^*)$ with $x_{u_0}=\max_{u\in V(G^*)}x_u$.
If $G^*\ncong C_g$, then $d_{G^*}(u_0)\geq 3$.
\end{claim}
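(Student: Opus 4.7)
The strategy is to suppose for contradiction that $d_{G^*}(u_0)\le 2$ and derive $G^*\cong C_g$, contradicting the hypothesis. Since $G^*$ has girth $g$, it contains $C_g$ as a subgraph, so by monotonicity of the signless Laplacian spectral radius under taking subgraphs (the signless Laplacian of a subgraph, padded with zeros, is entrywise dominated by $Q(G^*)$), we have $q(G^*)\ge q(C_g)=4$.

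If $d_{G^*}(u_0)=1$ with unique neighbor $v$, the eigenvalue equation at $u_0$ reads $q(G^*)x_{u_0}=x_{u_0}+x_v\le 2x_{u_0}$, yielding $q(G^*)\le 2$, which contradicts $q(G^*)\ge 4$.

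If $d_{G^*}(u_0)=2$ with neighbors $v_1,v_2$, the eigenvalue equation at $u_0$ gives $(q(G^*)-2)x_{u_0}=x_{v_1}+x_{v_2}\le 2x_{u_0}$, forcing $q(G^*)=4$ and $x_{v_1}=x_{v_2}=x_{u_0}$. Set $M:=\{v\in V(G^*):x_v=x_{u_0}\}$. The plan is to show that every $v\in M$ satisfies $d_{G^*}(v)=2$ and $N_{G^*}(v)\subseteq M$, by induction on the distance from $u_0$. The base case $v=u_0$ is already in hand, since $v_1,v_2\in M$. For the inductive step, let $v\in M$ have a neighbor $v'\in M$ produced by the previous step, and consider the eigenvalue equation $(4-d_{G^*}(v))x_{u_0}=\sum_{w\sim v}x_w$. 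Since $X>0$ and $v$ has at least one neighbor, the right side is strictly positive, so $d_{G^*}(v)\le 3$. If $d_{G^*}(v)=3$, then the single summand $x_{v'}=x_{u_0}$ already saturates the right side, forcing the remaining two summands to vanish, which contradicts $X>0$. Hence $d_{G^*}(v)=2$, and the identity $x_{v'}+x_{w}=2x_{u_0}$ together with $x_w\le x_{u_0}$ forces $x_w=x_{u_0}$, i.e., $w\in M$.

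Now connectedness of $G^*$ and the closure property propagate membership of $M$ from $u_0$ to every vertex of $G^*$, giving $M=V(G^*)$. So $G^*$ is $2$-regular and connected, hence a single cycle; the girth condition forces this cycle to be $C_g$, contradicting $G^*\ncong C_g$. The main delicate point is the propagation step: one must verify at each stage that the newly examined max-entry vertex has a previously-established max-entry neighbor, pinning down a specific term in the eigenvalue equation — this is exactly what the induction on distance from $u_0$ provides, and it is essential for ruling out the case $d(v)=3$, which is not excluded purely from the bound $q(G^*)=4$.
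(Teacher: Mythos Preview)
Your argument is correct in spirit but takes a considerably longer route than the paper's. The paper observes that since $G^*\ncong C_g$ and $G^*$ has girth $g$, the cycle $C_g$ is a \emph{proper} subgraph of the connected graph $G^*$ (Claim~\ref{cl1}), so Perron--Frobenius yields the \emph{strict} inequality $q(G^*)>q(C_g)=4$. Combined with the bound $q(G^*)\le 4$ coming from the eigenvalue equation at $u_0$ when $d_{G^*}(u_0)\le 2$, this is an immediate contradiction --- a two-line proof. You instead invoke only the weak inequality $q(G^*)\ge 4$ and then devote most of the argument to characterizing the equality case $q(G^*)=4$ via a propagation of the maximal entries, eventually re-deriving that $G^*$ is $2$-regular. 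Your approach is more self-contained in that it does not appeal to strict monotonicity for proper subgraphs of connected graphs, but the paper's route is far more economical.

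One small gap: in the inductive step you pass from ``$d_{G^*}(v)\le 3$ and $d_{G^*}(v)\ne 3$'' directly to ``$d_{G^*}(v)=2$'' without ruling out $d_{G^*}(v)=1$. That case is trivial (if $d_{G^*}(v)=1$ then the unique neighbor is $v'$ and $(4-1)x_{u_0}=x_{v'}=x_{u_0}$ forces $x_{u_0}=0$), but it should be stated for completeness.
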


\begin{proof}
Suppose to the contrary that $d_{G^*}(u_0)\leq2$. Then
\begin{align*}
q(G^*)x_{u_0}=d_{G^*}(u_0)x_{u_0}+\sum\limits_{u\in N_{G^*}(u_0)}x_u\leq 4x_{u_0},
\end{align*}
which gives that $q(G^*)\leq 4$.
However, $C_g$ is a proper subgraph of $G^*$, since $G^*\ncong C_g$.
Thus, $q(G^*)>q(C_g)=4,$ a contradiction.
The claim follows.
\end{proof}

\begin{claim}\label{cl3}
There exists a cycle $C$ in $G^*$ with $u_0\in V(C)$.
\end{claim}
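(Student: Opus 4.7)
My plan is to argue by contradiction: suppose $u_0$ lies on no cycle of $G^*$. The claim is trivial when $G^* \cong C_g$ (then $u_0 \in V(C_g) = V(G^*)$), so I may assume $G^* \not\cong C_g$, whence Claim \ref{cl2} yields $d_{G^*}(u_0) \geq 3$. Because $u_0$ lies on no cycle, every edge incident to $u_0$ is a bridge, so $u_0$ is a cut vertex and $G^* - u_0$ decomposes into exactly $s = d_{G^*}(u_0) \geq 3$ components $T_1, \ldots, T_s$, each containing a unique neighbor $v_i$ of $u_0$. The girth-$g$ cycle of $G^*$ must live in some $T_i$, say $T_1$, giving $|T_1| \geq g \geq 4$; in particular $v_1$ has at least one neighbor inside $T_1$, so $W := N_{G^*}(v_1) \setminus \{u_0\}$ is nonempty.

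Next, I would apply Lemma \ref{leHong05} with $u = u_0$ and $v = v_1$, transferring every edge $v_1w$ ($w \in W$) to the new edge $u_0 w$, and denote the resulting graph by $\tilde G$. The switch is legitimate: $u_0$ and $v_1$ share no common neighbor in $G^*$---a common neighbor would produce a triangle through $u_0$, contradicting that $u_0$ lies on no cycle---so $W \subseteq N_{G^*}(v_1) \setminus N_{G^*}(u_0)$. Since $x_{u_0} = \max_v x_v \geq x_{v_1}$, Lemma \ref{leHong05} yields $q(\tilde G) > q(G^*)$. Moreover $|E(\tilde G)| = m$, and $\tilde G$ has no isolated vertex ($v_1$ simply becomes a pendant at $u_0$).

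The crux of the argument, and the step I expect to require the most care, is confirming $\tilde G \in \mathcal{G}(m,g)$, i.e.\ that $\tilde G$ still has girth $g$. My plan is to observe that $\tilde G$ is isomorphic to the graph $\hat G$ obtained from $G^*$ by first contracting the bridge $u_0 v_1$ to a single vertex $u^*$ and then attaching a new pendant $v_1'$ at $u^*$ (the isomorphism sends $u_0 \mapsto u^*$ and $v_1 \mapsto v_1'$, fixing all other vertices). A short case analysis of the cycles in $\hat G$ passing through $u^*$---splitting by whether the two edges incident to $u^*$ come from edges of $G^*$ incident to $u_0$ only, to $v_1$ only, or one of each---combined with the hypothesis that $u_0$ lies on no cycle (which empties the ``both at $u_0$'' case outright and forbids the ``mixed'' case, since the latter would reconstruct a cycle through the bridge $u_0 v_1$) produces a length-preserving correspondence between cycles of $\hat G$ and cycles of $G^*$. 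Therefore the girth of $\hat G$ equals $g$, appending the pendant does not alter the girth, and so the girth of $\tilde G$ is $g$. Combined with $q(\tilde G) > q(G^*)$ this contradicts the extremality of $G^*$, forcing $u_0$ onto a cycle. The delicate point is precisely this girth bookkeeping: without the hypothesis that $u_0$ is on no cycle, the mixed case would revive and the contraction could in principle create cycles shorter than $g$.
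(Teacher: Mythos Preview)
Your argument is correct and follows essentially the same strategy as the paper: assume $u_0$ lies on no cycle, pick a nearby vertex, transfer its incident edges to $u_0$ via Lemma~\ref{leHong05}, and check that girth is preserved because only a bridge/pendant structure is disturbed. The only cosmetic difference is the choice of switching vertex---the paper walks along a shortest path to the set $S$ of cycle-vertices and switches at the far endpoint $u_k\in S$, whereas you switch at the immediate neighbor $v_1$; your contraction-of-a-bridge viewpoint for the girth bookkeeping is in fact more explicit than the paper's one-line remark that ``$P$ is a pendent path in $G$''.
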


\begin{proof}
Let $S$ be the set of vertices which are contained in cycles of $G^*$.
Suppose to the contrary that $u_0\notin S$.
Then we can find a shortest path from $u_0$ to $S$,
say $P:=u_0u_1\ldots u_k$, where $k\geq1$ and $u_k\in S$.
Clearly, $V(P)\cap S=\{u_k\}$, and hence every edge in $E(P)$ is a cut edge of $G^*$.
Now define
\begin{align*}
G=G^*-\{u_ku: u\in N_{G^*}(u_k)\setminus \{u_0\}\}+\{u_0u: u\in N_{G^*}(u_k)\setminus\{u_0\}\}.
\end{align*}
One can observe that $P$ is a pendent path starting from $u_0$ in $G$, and so $G\in \mathcal{G}(m,g)$.
Moreover, since $x_{u_0}\geq x_{u_k}$, we have $q(G)>q(G^*)$ by Lemma \ref{leHong05},
which contradicts the maximality of $q(G^*)$.
Therefore, $u_0\in S$, and the claim holds.
\end{proof}

\begin{claim}\label{cl4}
There exists a cycle $C^*$ of length $g$ in $G^*$ with $u_0\in V(C^*)$.
\end{claim}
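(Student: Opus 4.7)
The plan is a proof by contradiction: suppose no cycle of length $g$ in $G^*$ contains $u_0$. Since $G^*$ has girth $g$, there is a $g$-cycle $C_0 = w_0 w_1 \cdots w_{g-1}w_0$ in $G^*$ and, by hypothesis, $u_0 \notin V(C_0)$. The strategy is to produce a graph $G' \in \mathcal{G}(m, g)$ with $q(G') > q(G^*)$, contradicting the extremality of $G^*$; the construction will be a double edge-switch that transplants two edges of $C_0$ onto $u_0$.

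Concretely, I label $C_0$ so that $w_0$ is at minimum distance $t := d_{G^*}(u_0, w_0) \geq 1$ from $u_0$, and set
$$G' := G^* - \{w_0 w_1, w_0 w_{g-1}\} + \{u_0 w_1, u_0 w_{g-1}\}.$$
To legitimately invoke Lemma \ref{leHong05} I first verify $\{w_1, w_{g-1}\} \cap N_{G^*}(u_0) = \emptyset$: for $t \geq 2$ this follows from the minimality of $t$, since every vertex of $C_0$ is at distance at least $t$ from $u_0$; for $t = 1$, any such edge would complete a triangle with $w_0$, contradicting $g \geq 4$. With this in hand and $x_{u_0} = \max_{v} x_v \geq x_{w_0}$, Lemma \ref{leHong05} applied with $u = u_0$, $v = w_0$ yields $q(G') > q(G^*)$. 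Clearly $|E(G')| = m$ and $u_0 w_1 w_2 \cdots w_{g-1} u_0$ is a $g$-cycle of $G'$, so the girth of $G'$ is at most $g$.

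The crux of the proof is showing the girth of $G'$ equals $g$. Any shorter cycle in $G'$ uses at least one of the new edges $u_0 w_1, u_0 w_{g-1}$. If both are used, removing them exposes a $w_1$-$w_{g-1}$ path of length less than $g-2$ in $G^*$ which, together with the edges $w_0 w_1$ and $w_0 w_{g-1}$, closes into a cycle of length less than $g$ in $G^*$, violating the girth. If only one, say $u_0 w_1$, is used, removing it yields a $u_0$-$w_1$ path $P$ of length less than $g-1$ in $G^*$ avoiding the deleted edges; appending $P$ with the edge $w_1 w_0$ and a shortest $u_0$-$w_0$ path produces a closed walk through $u_0$ containing a simple cycle through $u_0$ of length at most $|P| + 1 + t$. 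For $t = 1$ this cycle has length at most $g$, contradicting the standing assumption on cycles through $u_0$.

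The stubborn case $t \geq 2$ is the main technical obstacle, since the cycle produced could a priori have length as large as $g + t - 1$. To handle it, I will iteratively apply Lemma \ref{leHong05} along the shortest $u_0$-$C_0$ path, transferring the neighbourhoods of the intermediate vertices $p_{t-1}, p_{t-2}, \ldots, p_1$ onto $u_0$; each such switch strictly increases $q$ and, under the standing assumption that no cycle through $u_0$ has length at most $g$, a routine case check shows the girth is preserved. After at most $t - 1$ switches $u_0$ becomes adjacent to some vertex of $C_0$, reducing to the already-solved case $t = 1$ and completing the contradiction, thereby establishing Claim \ref{cl4}.
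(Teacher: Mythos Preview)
Your $t=1$ case is essentially sound (with the minor caveat that when $w_0\in V(P)$ the short cycle you extract need not pass through $u_0$---but it has length $<g$, contradicting the girth directly). The $t\ge 2$ reduction, however, does not work. Already the single double-switch can lower the girth: with $g=5$, $C_0=w_0w_1w_2w_3w_4w_0$, a path $u_0p_1p_2w_0$ (so $t=3$), and a second path $u_0q_1q_2w_1$, the graph has girth $5$ and no $5$-cycle through $u_0$, yet your $G'$ contains the $4$-cycle $u_0w_1q_2q_1u_0$. Your iterative fix fares no better: for $t\ge4$, transferring $N_{G^*}(p_{t-1})\setminus N_{G^*}(u_0)$ to $u_0$ adds in particular the edge $u_0p_{t-2}$ while the path $u_0p_1\cdots p_{t-2}$ survives, producing a $(t-1)$-cycle. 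And even setting girth aside, applying Lemma~\ref{leHong05} at step $i$ requires $u_0$ to maximise the Perron vector of the graph produced at step $i-1$, which you have not established.

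The paper's argument is entirely different and avoids these issues. By Claim~\ref{cl3}, $u_0$ lies on some cycle; take the \emph{shortest} such cycle $C=u_0u_1\cdots u_{|C|-1}u_0$ and suppose $|C|\ge g+1$. Since $g\ge4$ we have $N_{G^*}(u_0)\cap N_{G^*}(u_1)=\varnothing$, so Lemma~\ref{lem2} applies: contracting $u_0u_1$ and attaching a pendent edge yields $G'$ with $q(G')>q(G^*)$. No $g$-cycle is destroyed, since any cycle through the edge $u_0u_1$ contains $u_0$ and hence has length $\ge|C|>g$; and no cycle of length $<g$ is created, since every $(u_0,u_1)$-path in $G^*-u_0u_1$ has length $\ge|C|-1\ge g$. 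Thus $G'\in\mathcal{G}(m,g)$, the desired contradiction in one step.
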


\begin{proof}
Let $C$ be a shortest cycle containing $u_0$. We shall show $|C|=g$.
Suppose to the contrary that $|C|\geq g+1$, and let $C=u_0u_1\ldots u_{|C|-1}u_0$.
Since the girth of $G^*$ is $g\geq4$, we have $N_{G^*}(u_0)\cap N_{G^*}(u_1)=\varnothing$.
Now let $G'$ be a graph obtained from $G^*$ by contracting $u_0u_1$ as a new vertex $u^*$ and adding a pendent edge to $u^*$.
Then $e(G')=e(G)=m$, and $q(G')>q(G^*)$ by Lemma \ref{lem2}.
Furthermore, we will see that $G'\in\mathcal{G}(m,g)$.

On the one hand,
since $|C|\geq g+1$, the edge $u_0u_1$ does not belong to any cycle of length $g$ in $G^*$.
Hence, contracting $u_0u_1$ does not destroy cycles of length $g$.
On the other hand, since $C$ is a shortest cycle containing $u_0$ in $G^*$,
$P=u_1\ldots u_{|C|-1}u_0$ is a shortest $(u_0,u_1)$-path in $G^*-\{u_0u_1\}$.
Note that $P$ is of length $|C|-1\geq g$.
Thus, contracting $u_0u_1$ does not give cycles of lengths less than $g$.
Now we obtain that $q(G')>q(G^*)$ and $G'\in G_{m,g}$, which contradicts the choice of $G^*$.
Therefore, the claim holds.
\end{proof}

To complete the proof of Theorem \ref{thgirth}, it suffices to show the following claim.

\begin{claim}
Every edge not on $C^*$ is incident to $u_0$.
\end{claim}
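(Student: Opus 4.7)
The plan is to argue by contradiction. Assume there exists an edge $e = vw \in E(G^*) \setminus E(C^*)$ with $u_0 \notin \{v,w\}$; the goal is to produce a graph $G' \in \mathcal{G}(m,g)$ with $q(G') > q(G^*)$, contradicting the extremality of $G^*$.

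First, I would eliminate the case $v, w \in V(C^*)$. Such an edge would be a chord of the cycle $C^*$ of length $g \geq 4$, and any chord of a $g$-cycle splits it into two smaller cycles, one of which has length at most $\lfloor g/2 \rfloor + 1 < g$, violating the girth. Hence at least one endpoint, say $v$, lies outside $V(C^*)$. Moreover $v$ and $w$ cannot both belong to $N_{G^*}(u_0)$, for otherwise $u_0vwu_0$ would be a triangle, contradicting $g \geq 4$; so by symmetry I may assume $v \notin N_{G^*}(u_0)$.

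Next I would apply Lemma \ref{leHong05} with $u = u_0$, with the lemma's role of $v$ played by $w$, and $\{v_1\} = \{v\}$: delete the edge $wv$ and add the edge $u_0v$. The hypothesis $x_{u_0} \geq x_w$ holds by the maximality of $x_{u_0}$, and $v \in N_{G^*}(w) \setminus N_{G^*}(u_0)$ by our reductions. Denoting the resulting graph by $G'$, we have $e(G') = m$ and $q(G') > q(G^*)$, so it remains only to verify $G' \in \mathcal{G}(m,g)$.

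The main obstacle is to check that $G'$ still has girth $g$. Any cycle of $G'$ that avoids the new edge $u_0v$ lies already in $G^* - wv$ and has length $\geq g$; any cycle of $G'$ through $u_0v$ corresponds to a $v$--$u_0$ path $P$ in $G^* - wv$, producing a cycle of length $|P|+1$ in $G'$. When $d_{G^*}(v) = 1$, the vertex $v$ becomes disconnected from $u_0$ after removing $wv$, so no such $P$ exists and $G'$ trivially has girth $g$. The difficult case is $d_{G^*}(v) \geq 2$: a short $v$--$u_0$ path in $G^* - wv$ would create a short cycle in $G'$. I would handle this by exploiting the girth of $G^*$ together with the minimality of $C^*$ through $u_0$ established in Claim \ref{cl4}, showing that any such short path, combined with the edge $vw$ and a suitable $w$--$u_0$ path in $G^*$, already produces a cycle of length less than $g$ inside $G^*$, a contradiction. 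In configurations where this direct argument is delicate, I would switch strategies---swapping the roles of $v$ and $w$, moving a maximal set of neighbors of $v$ lying outside $N_{G^*}[u_0]$ simultaneously via Lemma \ref{leHong05}, or invoking Lemma \ref{lem2} to contract an edge on an internal path away from $C^*$. Executing this case analysis carefully enough to cover every configuration of edges not on $C^*$ is the bulk of the technical work.
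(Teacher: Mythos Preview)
Your approach has a genuine gap in the girth-preservation step, and it is not merely a matter of tedious casework. Consider $g=4$ and a vertex $v\notin V(C^*)\cup N_{G^*}(u_0)$ that has two neighbours $w,w'$ with $w'\in N_{G^*}(u_0)$. Nothing in the girth hypothesis forbids this: the edges $vw,\ vw',\ w'u_0$ form no cycle in $G^*$. After your switch $G'=G^*-wv+u_0v$, however, you create the triangle $u_0vw'$, so $G'$ has girth~$3$ and is not in $\mathcal{G}(m,4)$. Your proposed remedy---combining the short $v$--$u_0$ path with $vw$ and a $w$--$u_0$ path to force a short cycle already in $G^*$---does not work here: the resulting closed walk has length at least $g$ whenever $w$ is far enough from $u_0$ along $C^*$, and in general the paths involved need not even be internally disjoint. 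The fallback strategies you list (swapping $v$ and $w$, moving several neighbours at once, contracting an internal path) do not obviously salvage the argument either, since the same obstruction can recur.

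The paper sidesteps this difficulty entirely by a different construction: it deletes \emph{all} edges of $E(G^*)\setminus E(C^*)$ not incident to $u_0$ simultaneously, and replaces each by a pendent edge from $u_0$ to a \emph{new} isolated vertex. The resulting graph $G''$ is exactly $G_{m,g}$ (possibly with isolated vertices), so its girth is $g$ for free. The price is that Lemma~\ref{leHong05} no longer applies directly; instead the paper compares $q(G'')$ and $q(G^*)$ via the identity
\[
X^{T}Y\bigl(q(G'')-q(G^*)\bigr)=\sum_{u_0w_i\in E_2}(x_{u_0}+x_{w_i})(y_{u_0}+y_{w_i})-\sum_{uv\in E_1}(x_u+x_v)(y_u+y_v),
\]
using both Perron vectors $X$ of $G^*$ and $Y$ of $G''$, and then establishes the entrywise estimates $x_u+x_v\le 2x_{u_0}$ and $y_u+y_v\le\frac12 y_{u_0}$ for each $uv\in E_1$. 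The second inequality requires a careful analysis of the Perron vector of $G_{m,g}$, which is the real technical content of the proof. This is the idea your proposal is missing.
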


\begin{proof}
Let $E_1$ be the set of edges in $E(G^*)\setminus E(C^*)$ which are not incident to $u_0$.
If $E_1=\varnothing$, then the claim follows.
Now assume that $E_1\neq\varnothing$, and define $E_2=\{u_0w_i: i=1,\ldots,|E_1|\}$,
where $w_1,\ldots,w_{|E_1|}$ are isolated vertices added in $G^*$.
Let $G''=G^*-E_1+E_2$, and
let $X,Y$ be the Perron vectors of $Q(G^*)$ and $Q(G'')$, respectively.
Then
\begin{align}\label{1}
X^{T}Y(q(G'')-q(G^*))=\sum\limits_{u_0w_i\in E_2}(x_{u_0}+x_{w_i})(y_{u_0}+y_{w_i})-\sum\limits_{uv\in E_1}(x_u+x_v)(y_u+y_v).
\end{align}
We now estimate entries in $X$ and $Y$.
Since $w_1,\ldots,w_{|E_1|}$ are isolated vertices in $G^*$ and pendent vertices in $G''$, we have
\begin{align}\label{2}
x_{u_0}+x_{w_{i}}=x_{u_0}  ~~~~\mbox{and} ~~~~y_{u_0}+y_{w_{i}}>y_{u_0}
\end{align}
for each edge $u_0w_i\in E_2$.

Next consider edges in $E_1$.
For each edge $uv\in E_1$, it is obvious that
\begin{align}\label{3}
x_{u}+x_{v}\leq 2x_{u_0}.
\end{align}
Moreover, we will see that if $G^*\ncong K_{2,3}$, then
\begin{align}\label{4}
y_{u}+y_{v}\leq \frac{1}{2}y_{u_0}.
\end{align}

If $u,v\notin V(C^*)\cup N_{G^*}(u_0)$, then $u,v$ are two isolated vertices in $G''$, and so $y_{u}+y_{v}=0\leq\frac{1}{2}y_{u_0}$.
If  $u\in V(C^*)\cup N_{G^*}(u_0)$ and $v\notin V(C^*)\cup N_{G^*}(u_0)$, then $d_{G''}(u)\leq2$ and
$d_{G''}(v)=0$.
Now choose $u^*\in V(C^*)\cup N_{G^*}(u_0)$ such that $y_{u^*}=\max_{w\in (V(C^*)\setminus\{u_0\})\cup N_{G^*}(u_0)}y_w$.
Then $q(G'')y_{u^*}\leq2y_{u^*}+y_{u_0}+y_{u^*}$ and $y_v=0$, which also implies that
$y_{u}+y_{v}\leq y_{u^*}\leq\frac{1}{2}y_{u_0}$ as $q(G'')\geq\Delta(G'')+1\geq5$.
It remains the case $u,v\in V(C^*)\cup N_{G^*}(u_0)$.
Note that $C^*$ is a shortest cycle in $G^*$ and $|C^{*}|\geq4$.
Thus we may assume that $u\in V(C^*)$ and $v\in N_{G^*}(u_0)\setminus V(C^*)$.
Moreover, we can see that the distance between $u$ and $u_0$ in $C^*$ is exactly two.
Now we have $N_{G''}(v)=\{u_0\}$ and so $y_v=\frac{y_{u_0}}{q(G'')-1}$.

Let $u_1,u_{g-1}\in V(C^*)\cap N_{G''}(u_0)$.
By symmetry, $y_{u_{g-1}}=y_{u_1}$,
and clearly, $y_{u_1}>y_w$ for every $w\in N_{G''}(u_0)\setminus\{u_1,u_{g-1}\}$.
We will further see that $y_{u_1}=y_{u^*}$.
Otherwise, $y_{u^*}\neq y_{u_1}$, then $u^*$ is not adjacent to $u_0$.
Thus, $q(G'')y_{u^*}\leq2y_{u^*}+2y_{u^*}$, which gives that $q(G'')\leq4$, a contradiction.
Now choose $u_2\in V(C^*)$ with $y_{u_2}=\max_{w\in V(C^*)\setminus\{u_0,u_1,u_{g-1}\}}y_w$.
Then $q(G'')y_{u_1}\leq2y_{u_1}+y_{u_0}+y_{u_2}$ and
$q(G'')y_{u_2}\leq2y_{u_2}+\sum_{w\in N_{C^*}(u_2)}y_w$.
If $g\geq5$, then $\sum_{w\in N_{C^*}(u_2)}y_w\leq y_{u_1}+y_{u_2}$ and
thus $y_{u_2}\leq \frac{y_{u_1}}{q(G'')-3}\leq\frac12y_{u_1}.$
Combining  $q(G'')y_{u_1}\leq2y_{u_1}+y_{u_0}+y_{u_2}$ gives $y_{u_1}\leq
\frac{y_{u_0}}{q(G'')-\frac52}$ and $y_{u_2}\leq\frac{y_{u_0}}{2q(G'')-5}\leq\frac15y_{u_0}$.
It follows that $y_u+y_v\leq y_{u_2}+\frac14y_{u_0}<\frac12y_{u_0},$ as desired.
If $g=4$, then $m\geq7$ and $q(G'')\geq\Delta(G'')+1\geq6$ as $G^*\ncong K_{2,3}$.
Now $\sum_{w\in N_{C^*}(u_2)}y_w=y_{u_1}+y_{u_{g-1}}=2y_{u_1}$, and hence
$y_{u_2}\leq \frac{2y_{u_1}}{q(G'')-2}\leq\frac12y_{u_1}$.
Combining  $q(G'')y_{u_1}\leq2y_{u_1}+y_{u_0}+y_{u_2}$ gives $y_{u_1}\leq
\frac{y_{u_0}}{q(G'')-\frac52}$ and $y_{u_2}\leq\frac{y_{u_0}}{2q(G'')-5}\leq\frac17y_{u_0}$.
We also have $y_u+y_v\leq y_{u_2}+\frac15y_{u_0}<\frac12y_{u_0}.$

Observe that $|E_1|=|E_2|$. Combining with (\ref{1}-\ref{4}), we obtain that if $G^*\ncong K_{2,3}$, then
\begin{align*}
X^{T}Y(q(G'')-q(G^*))>|E_2|x_{u_0}y_{u_0}-|E_1|x_{u_0}y_{u_0}=0.
\end{align*}
Since $X^{T}Y>0$, we have $q(G'')>q(G^*)$, a contradiction.
If $G^*\cong K_{2,3}$, then $(m,g)=(6,4)$ and $G''\cong G_{6,4}$ (see Fig. \ref{fig.1}).
Straightforward calculation shows that $q(G_{6,4})=3+\sqrt{5}>5=q(K_{2,3})$.
This completes the proof.
\end{proof}

\section{Proof of Theorem \ref{thcir}}\label{sec4}

Recall that $m\geq3c-4$ and $\mathcal{H}(m,c)$ is the set of graphs of size $m$ with circumference $c$.
Note that $\mathcal{H}(m,3)\subseteq\mathcal{G}(m,3)$ and $H_{m,3}\cong G_{m,3}$.
By Theorem \ref{thgirth}, the case $c=3$ is solved.
In the following we assume $c\geq4$.
To prove Theorem \ref{thcir},
we consider a bigger graph family $\mathcal{H}(m,\geq c)$, where $\mathcal{H}(m,\geq c)$
is the set of graphs of size $m$ with circumference at least $c$.
We similarly use $G^*$ to denote an extremal graph with maximal signless Laplacian spectral radius in $\mathcal{H}(m,\geq c)$
and $X$ to denote the Perron vector of $Q(G^*)$ with $x_{u_0}=\max_{u\in V(G^*)}x_u.$
For simplicity, the proof is divided into some claims.

\begin{claim}\label{cl6}
$G^{*}$ is connected.
\end{claim}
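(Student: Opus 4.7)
The plan is to mimic verbatim the argument of Claim~\ref{cl1}, with only one extra check: that the vertex-identification operation, which was shown there to preserve girth, also preserves (or rather does not decrease) the circumference. Since the family here is $\mathcal{H}(m,\geq c)$, we only need the resulting graph to still have some cycle of length $\geq c$, which is an easier constraint than preserving a girth exactly equal to $g$.

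First I would suppose for contradiction that $G^{*}$ has $k\geq 2$ components $G_{1},G_{2},\ldots,G_{k}$ (recall that isolated vertices are excluded throughout the paper). Since $Q(G^{*})$ is block-diagonal with blocks $Q(G_{i})$, we have $q(G^{*})=\max_{1\leq i\leq k}q(G_{i})=q(G_{i_{0}})$ for some $i_{0}$. Then I would pick an arbitrary vertex $u_{i}\in V(G_{i})$ for each $i$ and form the graph $G$ by identifying $u_{1},u_{2},\ldots,u_{k}$ into a single vertex. Clearly $e(G)=e(G^{*})=m$ and $G$ is connected.

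The key verification is that $G\in\mathcal{H}(m,\geq c)$. The point is that identifying vertices that lie in \emph{distinct} components cannot create any new cycle: any cycle in $G$ would have to be decomposable into a sequence of closed segments inside individual $G_{i}$'s meeting only at the identified vertex, and since each component contributes at most one meeting point, no cycle can genuinely traverse two different components. Hence the cycle set of $G$ is exactly the union of the cycle sets of the $G_{i}$'s. In particular, a longest cycle of $G^{*}$, of length $\geq c$, persists as a cycle in $G$, so the circumference of $G$ is at least $c$, and thus $G\in\mathcal{H}(m,\geq c)$.

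To conclude, since $G$ is connected, $Q(G)$ is nonnegative and irreducible. Moreover $G_{i_{0}}$ is a proper subgraph of $G$ (the other components, being nonempty, contribute at least one extra edge), so by Perron--Frobenius we have $q(G)>q(G_{i_{0}})=q(G^{*})$, contradicting the maximality of $q(G^{*})$ over $\mathcal{H}(m,\geq c)$. This proves $G^{*}$ is connected. There is no real obstacle here; the only point one must be careful about is noting that identifying vertices \emph{across} components can neither destroy an existing cycle (no edges are removed) nor manufacture a new one (only one gluing point per component), which is precisely why the circumference condition $\geq c$ is preserved.
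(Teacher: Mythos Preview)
Your proof is correct and follows exactly the approach the paper intends: the paper's own proof of Claim~\ref{cl6} simply reads ``The proof of connectivity is similar as Claim~\ref{cl1},'' and you have spelled out that argument together with the (easy) additional observation that identifying vertices across distinct components neither destroys nor creates cycles, so membership in $\mathcal{H}(m,\geq c)$ is preserved.
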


\begin{proof}
The proof of connectivity is similar as Claim \ref{cl1}.
\end{proof}

Now denote by $\mathcal{C}_{max}$ the set of longest cycles in $G^*$.
Let $C^*$ have maximal $\sum_{u\in V(C^*)}x_u$ among all cycles in $\mathcal{C}_{max}$.

\begin{claim}\label{cl7}
For each $u\in V(C^*)$ and $v\in V(G^{*})\setminus V(C^*)$, we have $x_{u}\geq x_{v}$, and so $u_0\in V(C^*)$.
\end{claim}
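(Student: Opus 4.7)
The plan is to argue by contradiction. Suppose there exist $u\in V(C^*)$ and $v\in V(G^*)\setminus V(C^*)$ with $x_v>x_u$; I will either produce a graph in $\mathcal{H}(m,\ge c)$ whose signless Laplacian spectral radius exceeds $q(G^*)$, contradicting the extremality of $G^*$, or exhibit a cycle in $\mathcal{C}_{max}$ whose vertex-weight sum exceeds that of $C^*$, contradicting the choice of $C^*$. Write $C^*=u\, u_2u_3\cdots u_c u$, so $u_2$ and $u_c$ are the two neighbors of $u$ on $C^*$ (both distinct from $v$), and set $S=N_{G^*}(u)\setminus N_{G^*}[v]$.

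Suppose first that $S\ne\varnothing$, and form $G'=G^*-\{uw:w\in S\}+\{vw:w\in S\}$. Lemma \ref{leHong05} applied with $u$ and $v$ swapped, together with $x_v>x_u$ and $S\ne\varnothing$, yields $q(G')>q(G^*)$. The essential step is then to verify that $G'$ still has circumference at least $c$, which I would do by splitting into three subcases depending on whether $u_2,u_c\in N_{G^*}(v)$. If both are in $N_{G^*}(v)$, neither $uu_2$ nor $uu_c$ is switched and $C^*$ survives intact in $G'$. If exactly one, say $u_c$, is in $N_{G^*}(v)$, then $uu_2$ is switched to $vu_2$ while $vu_c$ is already an edge, so $vu_2u_3\cdots u_cv$ is a cycle of length $c$ in $G'$. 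If neither is in $N_{G^*}(v)$, both $vu_2,vu_c$ are new edges and again $vu_2u_3\cdots u_cv$ is a cycle of length $c$. A small technicality in the last subcase is that $u$ may become isolated in $G'$; in that event one replaces $G'$ by $G'-u$, which lies in $\mathcal{H}(m,\ge c)$ and has the same signless Laplacian spectral radius. In every subcase the extremality of $G^*$ is contradicted.

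Suppose instead that $S=\varnothing$, so $N_{G^*}(u)\subseteq N_{G^*}[v]$. In particular $u_2,u_c\in N_{G^*}(v)$, hence $C^{**}:=vu_2u_3\cdots u_cv$ is a cycle of length $c$ in $G^*$, so $C^{**}\in\mathcal{C}_{max}$, yet $\sum_{w\in V(C^{**})}x_w=\sum_{w\in V(C^*)}x_w+x_v-x_u>\sum_{w\in V(C^*)}x_w$, contradicting the maximal-weight choice of $C^*$. This establishes $x_u\ge x_v$ for every $u\in V(C^*)$ and $v\in V(G^*)\setminus V(C^*)$. For the final assertion, if the chosen $u_0$ lies outside $V(C^*)$, the first part forces $x_{u_0}\le x_w$ for every $w\in V(C^*)$; since $x_{u_0}=\max_z x_z$, equality must hold throughout $V(C^*)$, and one may simply re-designate $u_0$ as any vertex of $C^*$. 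The main obstacle in the argument is the three-subcase circumference-preservation analysis in Case 1; the remaining steps reduce to routine applications of Lemma \ref{leHong05} and the extremal choices of $G^*$ and $C^*$.
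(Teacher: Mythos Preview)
Your argument is correct. You use the same two ingredients as the paper---edge switching via Lemma~\ref{leHong05} and the weight-maximality of $C^*$---but you deploy them in the reverse order and at a coarser scale, which makes your proof longer than necessary. The paper first invokes the weight-maximality of $C^*$ to rule out $v\in N_{G^*}(u^-)\cap N_{G^*}(u^+)$ (where $u^-,u^+$ are the $C^*$-neighbours of $u$), and then switches only one or both of the two edges $uu^-$, $uu^+$ to $v$; in each of the three remaining cases the replacement cycle through $v$ is immediate and no isolated-vertex technicality arises. By contrast, you switch the entire set $S=N_{G^*}(u)\setminus N_{G^*}[v]$, which forces the three-subcase circumference check and the cleanup for an isolated $u$, and you only fall back on weight-maximality when $S=\varnothing$. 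Both routes work; the paper's is simply more economical.

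Two small remarks. First, your labelling $C^*=u\,u_2u_3\cdots u_cu$ presumes $|C^*|=c$, which is not established until Claim~\ref{cl11}; the argument is unaffected if you write $u_{|C^*|}$ in place of $u_c$ throughout. Second, your handling of the conclusion $u_0\in V(C^*)$ by re-designation is fine and in fact slightly more explicit than the paper's, which merely asserts it.
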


\begin{proof}
Suppose to the contrary that
there exist $u\in V(C^*)$ and $v\in V(G^{*})\setminus V(C^*)$ such that $x_{v}>x_{u}$.
Let $u^{-}$ and $u^{+}$ be the predecessor and the successor of $u$ in $C^*$, respectively.
Since $C^*$ has maximal sum of Perron entries over all longest cycles,
we have $v\notin N_{G^*}(u^{-})\cap N_{G^*}(u^{+})$.
Now we define $G=G^*-\{uu^-,uu^+\}+\{vu^-,vu^+\}$ if $v\notin N_{G^*}(u^{-})\cup N_{G^*}(u^{+})$;
$G=G^*-\{uu^-\}+\{vu^-\}$ if $v\in N_{G^*}(u^+)\setminus N_{G^*}(u^-)$;
and $G=G^*-\{uu^+\}+\{vu^+\}$ if $v\in N_{G^*}(u^-)\setminus N_{G^*}(u^+)$.
Clear, $G\in\mathcal{H}(m,\geq c)$, as $G$ still contains a cycle of length $|C^*|$.
However, by Lemma \ref{leHong05} we have $q(G)>q(G^{*})$, a contradiction.
The claim holds.
\end{proof}

A vertex $u$ in a graph $G$ is called a \emph{dominating vertex}, if $N_G[u]=V(G)$.
If there is a vertex subset $S\subseteq N_G[u]$, then we say that $u$ dominates $S$.

\begin{claim}\label{cl8}
If $uv\in E(G^*)$ with $v\in V(G^{*})\setminus V(C^*)$, then $u$ dominates $V(C^*)$.
\end{claim}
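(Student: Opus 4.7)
The plan is to argue by contradiction using the edge-switching operation of Lemma~\ref{leHong05}. Assume the hypothesis $uv\in E(G^*)$ with $v\in V(G^*)\setminus V(C^*)$, and suppose for contradiction that some $w\in V(C^*)$ with $w\ne u$ satisfies $uw\notin E(G^*)$. My target is to produce the graph $G'=G^*-uv+uw$, of the same size $m$, which still lies in $\mathcal{H}(m,\ge c)$ but satisfies $q(G')>q(G^*)$, contradicting the extremal choice of $G^*$.

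The circumference-preservation step I expect to be immediate: since $v\notin V(C^*)$, the deleted edge $uv$ is not an edge of $C^*$, so $C^*$ remains a subgraph of $G'$ and the circumference of $G'$ is at least $|C^*|\ge c$. If $v$ happens to become isolated in $G'$ (which occurs only if $u$ was its unique neighbor), I would silently delete $v$; this leaves the size of $G'$ and the value of $q(G')$ unchanged and restores membership in $\mathcal{H}(m,\ge c)$.

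For the spectral increase, I would invoke Lemma~\ref{leHong05} with the role assignment: the lemma's $u$ is taken to be our $w$, its $v$ is our $v$, and the single shifted neighbor $v_1$ is our $u$. The set-theoretic inclusion $u\in N_{G^*}(v)\setminus N_{G^*}(w)$ follows from $uv\in E(G^*)$ and $uw\notin E(G^*)$, while the Perron-entry hypothesis $x_w\ge x_v$ is exactly Claim~\ref{cl7} applied to the pair $w\in V(C^*)$, $v\in V(G^*)\setminus V(C^*)$. Lemma~\ref{leHong05} then outputs $q(G')>q(G^*)$, completing the contradiction. The main step is the clean role-matching in Lemma~\ref{leHong05}, which reduces the entire claim to the already-established comparison in Claim~\ref{cl7}; I do not expect a serious obstacle beyond this bookkeeping, because all cycle-preservation facts are forced by the location of $v$ off $C^*$.
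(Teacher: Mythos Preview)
Your proposal is correct and follows essentially the same approach as the paper: assume some $u'\in V(C^*)$ (your $w$) is not adjacent to $u$, switch the edge $uv$ to $uu'$, observe that $C^*$ survives so the new graph stays in $\mathcal{H}(m,\ge c)$, and apply Lemma~\ref{leHong05} together with the inequality $x_{u'}\ge x_v$ from Claim~\ref{cl7}. Your write-up is slightly more careful than the paper's in making the role assignment in Lemma~\ref{leHong05} explicit and in handling the possible isolated vertex.
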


\begin{proof}
Otherwise, say $u'\notin N_{G^*}(u)$ for some $u'\in V(C^*)$,
then by Claim \ref{cl7} $x_{u'}\geq x_v$.
Now we define $G=G^*-\{uv\}+\{uu'\}.$
Then $G\in\mathcal{H}(m,\geq c)$, as $C^*\subseteq G$.
However, by Lemma \ref{leHong05} we have $q(G)>q(G^{*})$, a contradiction.
\end{proof}

A \emph{vertex cover} of a graph $G$ is a vertex subset that covers all edges of $G$.

\begin{claim}\label{cl9}
$V(C^*)$ is a vertex cover of $G^*$.
\end{claim}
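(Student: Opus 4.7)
I would argue by contradiction: suppose $V(C^*)$ fails to cover some edge, so there exist $v,w\in V(G^*)\setminus V(C^*)$ with $vw\in E(G^*)$. The goal is to build a cycle of length $|C^*|+2$ in $G^*$, contradicting the maximality of $|C^*|$ among cycles of $G^*$.

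The first step is a double application of Claim \ref{cl8}. The hypothesis of that claim is satisfied by the edge $vw$ together with $w\notin V(C^*)$, so it yields that $v$ dominates $V(C^*)$; swapping the roles of $v$ and $w$ (using now $v\notin V(C^*)$) gives, by the same claim, that $w$ also dominates $V(C^*)$. Thus both $v$ and $w$ are adjacent to every vertex of $C^*$, and $vw$ itself is an edge.

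The second step is a direct cycle-extension construction. Writing $C^* = u_1u_2\cdots u_{c^*}u_1$ with $c^* = |C^*|$, I consider the closed walk
$$v,\ u_1,\ u_2,\ \ldots,\ u_{c^*},\ w,\ v.$$
Its edges are $vu_1$, the path $u_1u_2\cdots u_{c^*}$ along $C^*$, together with $u_{c^*}w$ and $wv$; all of these lie in $E(G^*)$ by the previous paragraph, and its $c^*+2$ vertices are pairwise distinct since $v,w\notin V(C^*)$ and $v\neq w$. Hence it is a cycle of length $c^*+2$, contradicting the fact that $C^*$ is a longest cycle in $G^*$.

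There is no real obstacle here; the argument is a clean reduction of the vertex-cover condition to a longer-cycle contradiction via Claim \ref{cl8}. The only point to verify carefully is that Claim \ref{cl8} applies to each endpoint of $vw$ in turn, which it does because its hypothesis only demands that the other endpoint of the edge lie outside $V(C^*)$.
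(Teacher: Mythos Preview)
Your proof is correct and follows essentially the same approach as the paper: assume an uncovered edge $vw$ exists, apply Claim~\ref{cl8} to both endpoints to conclude that each dominates $V(C^*)$, and then exhibit a cycle longer than $C^*$. The only cosmetic difference is that the paper observes one can find a cycle of length $|C^*|+1$ (by inserting a single one of $v,w$ between two consecutive vertices of $C^*$), whereas you explicitly build a cycle of length $|C^*|+2$ using both $v$ and $w$ and the edge $vw$; either construction yields the desired contradiction.
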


\begin{proof}
Suppose that $V(C^*)$ does not cover all edges, that is, there exists an edge $vv'$ with $v,v'\notin V(C^*)$.
Then by Claim \ref{cl8}, both $v$ and $v'$ dominate $V(C^*)$.
Consequently, we can easily find a cycle of length $|C^*|+1$, which contradicts the definition of $C^*$.
\end{proof}

\begin{claim}\label{cl10}
If $V(G^*)\setminus V(C^*)\neq\varnothing$, then $N_{G^*}(v)=\{u_0\}$ for each $v\in V(G^*)\setminus V(C^*)$.
\end{claim}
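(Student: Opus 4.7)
My plan is to split the proof into two stages: first pin down $u_0$ as a neighbor of every off-cycle vertex $v$, and then rule out any additional neighbors. Fix $v\in V(G^*)\setminus V(C^*)$.

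For the first stage, suppose for contradiction that $u_0\notin N_{G^*}(v)$. By Claim~\ref{cl9}, every neighbor of $v$ lies in $V(C^*)$, so I may pick $u\in N_{G^*}(v)\cap V(C^*)$ with $u\ne u_0$. Since $x_{u_0}\ge x_u$ by Claim~\ref{cl7}, Lemma~\ref{leHong05} applied with the single switchable neighbor $v\in N_{G^*}(u)\setminus N_{G^*}(u_0)$ produces $G'=G^*-uv+u_0v$ with $q(G')>q(G^*)$. Since $E(C^*)$ is untouched, $G'\in\mathcal{H}(m,\ge c)$, contradicting the maximality of $q(G^*)$.

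For the second stage, set $E_1=\{uv\in E(G^*):v\in V(G^*)\setminus V(C^*),\ u\in V(C^*)\setminus\{u_0\}\}$ and suppose $E_1\ne\varnothing$. Introduce fresh pendant vertices $z_1,\ldots,z_{|E_1|}$, let $E_2=\{u_0 z_i:1\le i\le |E_1|\}$, and put $G''=G^*-E_1+E_2$. Then $|E(G'')|=m$, $C^*\subseteq G''$, and by the first stage every off-$C^*$ vertex retains its edge to $u_0$, so $G''$ has no isolated vertex and belongs to $\mathcal{H}(m,\ge c)$. Following the template of the final claim in Section~\ref{sec3}, if $X,Y$ denote the Perron vectors of $Q(G^*)$ and $Q(G'')$ (with $X$ extended by zero on the new vertices $z_i$), then
\begin{equation*}
X^T Y\bigl(q(G'')-q(G^*)\bigr)=\sum_{u_0z_i\in E_2}(x_{u_0}+x_{z_i})(y_{u_0}+y_{z_i})-\sum_{uv\in E_1}(x_u+x_v)(y_u+y_v).
\end{equation*}
Using $x_{z_i}=0$, $y_{u_0}+y_{z_i}>y_{u_0}$, $x_u+x_v\le 2x_{u_0}$, and $|E_1|=|E_2|$, the task reduces to proving $y_u+y_v<\tfrac12 y_{u_0}$ for every $uv\in E_1$.

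The central obstacle is this last inequality, and it is exactly where the circumference setting differs from the girth setting. The useful features are: in $G''$ the vertex $v$ is pendant at $u_0$, yielding $y_v=y_{u_0}/(q(G'')-1)$; and by Claim~\ref{cl8}, $u$ dominates $V(C^*)$ while its off-$C^*$ edges all lie in $E_1$ and have been removed, giving $N_{G''}(u)=V(C^*)\setminus\{u\}$, so the eigenvalue equation at $u$ yields $y_u\le (\ell-1)y_{u_0}/(q(G'')-\ell+1)$, where $\ell=|C^*|$. The hypothesis $m\ge 3c-4$ is what forces $q(G'')$ to grow essentially linearly in $m$: indeed $u_0$ in $G''$ is adjacent to every other vertex of $V(C^*)$, to every off-$C^*$ vertex of $G^*$, and to each new pendant $z_i$, so $d_{G''}(u_0)$ is large and a lower bound $q(G'')\ge d_{G''}(u_0)+1$ together with $\ell\le c$ makes the two fractions add to at most $\tfrac12$. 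This yields $q(G'')>q(G^*)$, contradicting the extremality of $G^*$ and completing the proof.
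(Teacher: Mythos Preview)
Your first stage (forcing $u_0\in N_{G^*}(v)$) is correct and matches the paper. The second stage, however, contains a genuine gap.

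You assert that ``$\ell\le c$'' and use this to conclude that the two fractions $\frac{\ell-1}{q(G'')-\ell+1}$ and $\frac{1}{q(G'')-1}$ sum to at most $\tfrac12$. But $\ell=|C^*|$ is the length of a \emph{longest} cycle in $G^*\in\mathcal{H}(m,\ge c)$, so the correct inequality is $\ell\ge c$, not $\ell\le c$; at this stage Claim~\ref{cl11} has not yet been proved, and $\ell$ may be much larger than $c$. With $\ell$ large the argument collapses: the only lower bound you have is $q(G'')\ge d_{G''}(u_0)+1=\ell+|V_0|+|E_1|$, and since both $u_0$ and $u$ dominate $V(C^*)$ (giving at least $2\ell-7$ chords), one can have $|V_0|+|E_1|$ as small as $2$ while $\ell$ is roughly $m/3$. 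Then $q(G'')-\ell+1$ is only of order a constant, and $\frac{\ell-1}{q(G'')-\ell+1}$ is far larger than $\tfrac12$. The hypothesis $m\ge 3c-4$ does not help here because it is a lower bound on $m$ in terms of $c$, not in terms of $\ell$. You also silently use that $y_{u_0}$ is the maximum coordinate of $Y$; this is true but requires a short argument.

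The paper avoids this difficulty by arguing differently: it fixes a single off-cycle vertex $v$ and splits into the cases $d_{G^*}(v)=2$ and $d_{G^*}(v)\ge 3$. For $d_{G^*}(v)=2$ it swaps the single edge $u^*v$ for a pendant at $u_0$ and compares via a careful inequality involving $d_{G^*}(u^*)$ (not $\ell$). For $d_{G^*}(v)\ge 3$ it abandons the eigenvector-comparison template entirely and instead uses the crude embedding $G^*\subseteq K_{|C^*|+|V_2|}^{|V_1|}$ together with the bound $q(K_s^t)\le 2(s-1)+t$ from Lemma~\ref{le09}, and compares this to the star lower bound for a graph with many pendants at $u_0$; the fact that $v$ has at least three neighbours, each dominating $V(C^*)$, forces enough chords to make this comparison go through. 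Neither case relies on $m\ge 3c-4$ or on $\ell=c$.
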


\begin{proof}
Let $v$ be an arbitrary vertex in $V(G^*)\setminus V(C^*)$.
Then $u_0\in N_{G^*}(v)$
(otherwise, say $u\in N_{G^*}(v)$,
then by Lemma \ref{leHong05} $q(G^*-\{uv\}+\{u_0v\})>q(G^*)$, as $x_{u_0}\geq x_{u}$).
It follows from Claim \ref{cl8} that $u_0$ dominates $V(C^*)$,
and hence $u_0$ dominates $V(G^*)$ by the arbitrariness of $v\in V(G^*)\setminus V(C^*)$.
Next we consider two cases.

\vspace{1.5mm}
\noindent
{\bf Case 1.} $d_{G^{*}}(v)=2$.
\vspace{1.5mm}

Assume that $N_{G^*}(v)=\{u_0,u^*\}$.
Then by Claim \ref{cl9}, $u^*\in V(C^*)$ and so $d_{G^*}(u^*)\geq3$.
Let $G=G^*-\{u^*v\}+\{u_0w\}$, where $w$ is an isolated vertex added in $G^*$.
Clearly, $G\in \mathcal{H}(m,\geq c)$, as $C^*\subseteq G$.
Let $Y$ be the Perron vector of $Q(G)$. Then
\begin{align}\label{5}
q(G)y_{u_0}=(d_{G^*}(u_0)+1)y_{u_0}+\sum_{u\in N_{G^*}(u_0)\setminus \{u^*\}}y_{u}+y_{u^*}+y_w,
\end{align}
\begin{align}\label{6}
q(G)y_{u^*}=(d_{G^*}(u^*)-1)y_{u^*}+\sum_{u\in N_{G^{*}}(u^*)\setminus \{u_0\}}y_{u}+y_{u_0}-y_v.
\end{align}
Note that $N_{G^*}[u^*]\subseteq N_{G^*}[u_0]$ and $N_G(w)=N_G(v)=\{u_0\}$,
then $\sum_{u\in N_{G^{*}}(u^*)}y_u\leq\sum_{u\in N_{G^*}(u_0)}y_u$,
$d_{G^*}(u^*)\leq d_{G^*}(u_0)$ and $y_w=y_v$.
Combining with (\ref{5}-\ref{6}), we have
\begin{align*}
q(G)y_{u_0}-q(G)y_{u^*}\geq (d_{G^{*}}(u^*)+1)y_{u_0}-(d_{G^{*}}(u^*)-1)y_{u^*}+y_{u^*}-y_{u_0}+2y_v.
\end{align*}
It follows that $(q(G)-d_{G^{*}}(u^*))(y_{u_0}+y_v)\geq (q(G)-d_{G^{*}}(u^*)+2)(y_{u^*}+y_v)$.
Equivalently,
\begin{align}\label{7}
y_{u_0}+y_w\geq \frac{q(G)-d_{G^{*}}(u^*)+2}{q(G)-d_{G^{*}}(u^*)}(y_{u^*}+y_{v}),
\end{align}
as $y_v=y_w.$
On the other hand, since $w$ is an isolated vertex in $G^*$, we have
\begin{align}\label{8}
x_{u_0}+x_w=x_{u_0}.
\end{align}
Moreover, $N_{G^*}(v)=\{u_0,u^*\}$ implies that
$q(G^*)x_v=2x_v+x_{u_0}+x_{u^*}\leq 2x_v+2x_{u_0}$, and so
\begin{align}\label{9}
x_{u^*}+x_{v}\leq x_{u_0}+\frac{2}{q(G^*)-2}x_{u_0}=\frac{q(G^*)}{q(G^*)-2}x_{u_0}.
\end{align}
Combining with (\ref{7}-\ref{9}), we obtain that
\begin{align*}
X^TY(q(G)-q(G^*))&=(x_{u_0}+x_w)(y_{u_0}+y_w)-(x_{u^*}+x_v)(y_{u^*}+y_v)\\
&\geq\Big(\frac{q(G)-d_{G^*}(u^*)+2}{q(G)-d_{G^*}(u^*)}-\frac{q(G^*)}{q(G^*)-2}\Big)x_{u_0}(y_{u^*}+y_v)\\
&>0,
\end{align*}
where the last inequality follows from $q(G^*)\geq q(G)$ and $d_{G^*}(u^*)\geq3.$
Therefore, $q(G)>q(G^*)$, a contradiction.

\vspace{1.5mm}
\noindent
{\bf Case 2.} $d_{G^{*}}(v)\geq3$.
\vspace{1.5mm}

We first partition $V(G^*)\setminus V(C^*)$ into $V_1\cup V_2$,
where $V_1$ is the set of pendent vertices.
Clearly, $V_1\subseteq N_{G^*}(u_0);$
and Case 1 implies that $d_{G^*}(v)\geq3$ for each $v\in V_2$.
Now let $K_s^t$ be the graph obtained from $K_s$ by attaching $t$ pendent edges at a vertex of $K_s$.
Then by Lemma \ref{le09}, we have $q(K_s^t)\leq 2(s-1)+t.$
Observe that $G^*\subseteq K_{|C^*|+|V_2|}^{|V_1|}$.
Thus,
\begin{align}\label{10}
q(G^*)\leq q(K_{|C^*|+|V_2|}^{|V_1|})\leq 2(|C^*|+|V_2|-1)+|V_1|.
\end{align}

Now we partition $E(G^*)\setminus E(C^*)$ into $E_1\cup E_2$,
where $E_2$ is the set of chords of $C^*$.
Since by Claim \ref{cl9} $V_1\cup V_2$ is an independent set, we have
$|E_1|=\sum_{v\in V_1\cup V_2}d_{G^*}(v)\geq|V_1|+3|V_2|.$
Moreover, note that $v$ has at least three neighbors.
By Claim \ref{cl9}, $N_{G^*}(v)\subseteq V(C^*)$;
and by Claim \ref{cl8}, each of these neighbors dominates $V(C^*)$.
Thus, $|E_2|\geq 3|C^*|-12.$
Now let $G$ be the graph obtained from $C^*$ by attaching $|E_1|+|E_2|$ pendent edges at $u_0$.
Then, $G\in \mathcal{H}(m,\geq c)$, as $C^*\subseteq G$. Furthermore,
$\Delta(G)=|E_1|+|E_2|+2.$
It follows that
\begin{align}\label{11}
q(G)>q(K_{1,\Delta(G)})=\Delta(G)+1\geq 3|C^*|+|V_1|+3|V_2|-9.
\end{align}
Note that $v\in V_2$ and it has at least three neighbors in $V(C^*)$.
Then $|V_2|\geq1$, and neighbors of $v$ are not consecutive in $C^*$
(otherwise, we have a cycle of length greater than $|C^*|$).
This implies that $|C^*|\geq6$.
Comparing (\ref{10}) with (\ref{11}), we get that
$q(G)>q(K_{|C^*|+|V_2|}^{|V_1|})\geq q(G^*)$, a contradiction.
This completes the proof.
\end{proof}

By Claim \ref{cl10}, $E(G^*)\setminus E(C^*)=E_1\cup E_2$,
where $E_1$ consists of pendent edges incident to $u_0$ and $E_2$ consists of chords of $C^*$.

\begin{claim}\label{cl11}
$|C^*|=c$.
\end{claim}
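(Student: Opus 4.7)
My plan is a proof by contradiction. Assume $\ell := |C^*| \ge c+1$ and label $C^* = u_0 v_1 v_2 \cdots v_{\ell-1} u_0$. By Claim \ref{cl10}, $G^*$ is $C^*$ together with the chord set $E_2 \subseteq \binom{V(C^*)}{2}$ and pendent edges $E_1$ all incident to $u_0$. The goal is to modify $G^*$ into some $G' \in \mathcal{H}(m,\ge c)$ with $q(G') > q(G^*)$, contradicting extremality. The toolkit will be Lemma \ref{leHong05} (edge-switching) wherever it applies and Lemma \ref{lem2} (contraction compensated by a pendent at $u_0$) otherwise.

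The first phase is a two-directional edge-switching sweep. For each $j$ with $c-1 \le j \le \ell-2$, if $u_0 v_j \notin E(G^*)$, set $G' := G^* - v_j v_{j+1} + u_0 v_j$; since $x_{u_0}$ is maximal and $v_j \in N_{G^*}(v_{j+1}) \setminus N_{G^*}(u_0)$, Lemma \ref{leHong05} gives $q(G') > q(G^*)$, while the cycle $u_0 v_1 \cdots v_j u_0$ of length $j+1 \ge c$ survives, so $G' \in \mathcal{H}(m,\ge c)$. A mirror sweep over $1 \le j \le \ell-c$ (switching the edge $v_j v_{j+1}$ at its other endpoint to create the chord $u_0 v_{j+1}$ and preserve the complementary arc of length $\ell-j \ge c$) yields a symmetric conclusion. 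Absent a contradiction from Phase~1, $u_0$ must be adjacent to every $v_j$ with $j \in \{2,\ldots,\ell-c+1\} \cup \{c-1,\ldots,\ell-2\}$; in particular, whenever $\ell \ge 2c-3$ these ranges already cover $\{2,\ldots,\ell-2\}$ and so $u_0$ dominates $V(C^*)$.

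The second phase disposes of the residual structural cases. In the ``gap'' regime $c+1 \le \ell \le 2c-4$, some $v_{j_0}$ in the index gap satisfies $u_0 v_{j_0} \notin E(G^*)$; combined with the constraints of Phase 1 on the chord placement, one of the incident cycle edges $v_{j_0-1}v_{j_0}$ or $v_{j_0}v_{j_0+1}$ can be shown to satisfy $N_{G^*}(u) \cap N_{G^*}(v) = \varnothing$, so Lemma \ref{lem2} permits contraction to produce a cycle of length $\ell-1 \ge c$, and adding a pendent at $u_0$ restores the edge count. In the fully-dominated case every cycle edge has $u_0$ as a common neighbor (so Lemma \ref{lem2} is blocked), and I would instead delete the cycle edge $v_1 v_2$ and add a pendent $u_0 w$ at a new vertex $w$: the chord $u_0 v_2$ preserves a cycle of length $\ell-1 \ge c$ in the new graph $G''$, and inserting the Perron vector $X$ of $Q(G^*)$ (extended by $x_w = 0$) into the Rayleigh quotient yields
\[
q(G'') - q(G^*) \ge \frac{x_{u_0}^2 - (x_{v_1}+x_{v_2})^2}{\|X\|^2}.
\]

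The main obstacle will be the last inequality: one must verify $x_{u_0} > x_{v_1} + x_{v_2}$. The Perron equations at $v_1$ (degree $2$) and $v_2$ (degree $3$ after domination) give $x_{v_1} \le (x_{u_0}+x_{v_2})/(q-2)$ and $x_{v_2} \le 3 x_{u_0}/(q-3)$, reducing the task to a quantitative lower bound of the form $q(G^*)^2 - 9\,q(G^*) + 12 > 0$. The hypothesis $m \ge 3c-4$, combined with $q(G^*) \ge \Delta(G^*)+1 \ge (\ell-1) + (m - 2\ell + 3) + 1 = m - \ell + 3 \ge 2c - 2$, supplies this bound for $c$ not too small; the few small boundary values of $(c,\ell)$ would then be handled by a direct numerical check (e.g.\ the base case $c=4,\ell=5$ analogous to the $K_{2,3}$ check in Section \ref{sec3}). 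This closes the contradiction and forces $|C^*| = c$.
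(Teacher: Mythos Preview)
Your proposal has two genuine gaps in Phase~2 that the tools you invoke do not close.

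In the ``gap'' subcase you assert that, given the Phase~1 constraints, one of the cycle edges at $v_{j_0}$ satisfies $N_{G^*}(u)\cap N_{G^*}(v)=\varnothing$. But Phase~1 only \emph{forces} certain chords $u_0v_j$; it never \emph{forbids} chords among the $v_i$. If, say, $d_{G^*}(v_{j_0})=2$ and the chord $v_{j_0-1}v_{j_0+1}$ is present, then $v_{j_0+1}\in N(v_{j_0-1})\cap N(v_{j_0})$ and $v_{j_0-1}\in N(v_{j_0})\cap N(v_{j_0+1})$, so neither incident edge qualifies for Lemma~\ref{lem2}. And if $v_{j_0}$ itself carries chord-neighbours then it has degree $\ge 3$ and the \emph{internal-path} hypothesis of Lemma~\ref{lem2} fails as well. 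Nothing you have proved rules these configurations out.

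In the fully-dominated subcase you take $d_{G^*}(v_1)=2$ and $d_{G^*}(v_2)=3$, but Claims~\ref{cl9}--\ref{cl10} constrain only edges with an endpoint \emph{outside} $V(C^*)$; chords inside $V(C^*)$ are unrestricted. If $v_1$ or $v_2$ carries extra chords, your eigen-equation bounds on $x_{v_1},x_{v_2}$ collapse and the inequality $x_{u_0}>x_{v_1}+x_{v_2}$ is no longer available. A related slip: you write $|E_1|=m-2\ell+3$, which presumes the only chords are the $\ell-3$ incident to $u_0$; in general $|E_1|\le m-2\ell+3$, so your route to $\Delta(G^*)$ gives an upper bound, not a lower one. (The desired $q(G^*)>2c-1$ does hold, but via $q(G^*)\ge q(H_{m,c})>m-c+3$.)

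For contrast, the paper sidesteps both difficulties by working at the vertex $u_1\in V(C^*)$ of \emph{minimum} Perron entry. A single switch forces the chord $u_1^-u_1^+$, hence a $(|C^*|-1)$-cycle through $V(C^*)\setminus\{u_1\}$ already exists; further switches force $d_{G^*}(u_1)=2$ with $u_0\in\{u_1^-,u_1^+\}$, so that both $u_0$ and $u_2$ dominate $V(C^*)$. The argument then finishes with a global chord count and the envelope $q(G^*)\le q(K_{|C^*|}^{|E_1|})\le |E_1|+2|C^*|-2$ from Lemma~\ref{le09}, never needing to control individual Perron entries at vertices of unknown degree.
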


\begin{proof}
Recall that $|C^*|\geq c\geq 4$.
If $|C^*|=c$, then we are done.
Now suppose that $|C^*|\geq c+1$.
Let $u_1\in V(C^*)$ with $x_{u_1}=\min_{u\in V(C^*)}x_u$.
We will see that $u_1^-u_1^+$ is a chord of $C^*$.
Otherwise, define $G:=G^*-\{u_1u_1^+\}+\{u_1^-u_1^+\}$,
then $G$ has a cycle of length $|C^*|-1\geq c$, and so $G\in\mathcal{H}(m,\geq c)$.
Moreover, since $x_{u_1^-}\geq x_{u_1}$, by Lemma \ref{leHong05} we have $q(G)>q(G^*)$,
a contradiction.

Now $G^*$ contains a $(|C^*|-1)$-cycle $C$ with $V(C)=V(C^*)\setminus\{u_1\}$.
Subsequently, each neighbor of $u_1$ dominates $V(C^*)$,
since $N_{G^*}(u_1)\subseteq V(C^*)$ and $x_u\geq x_{u_1}$ for each $u\in V(C^*)$.
Furthermore, $u_0u_1\in E(G^*)$
(otherwise, $q(G^*-\{u_1^+u_1\}+\{u_0u_1\})>q(G^*)$, as $x_{u_0}\geq x_{u_1^+}$).
It follows that each of $u_1^-$, $u_1^+$ and $u_0$ dominates $V(C^*)$.

Now, if $d_{G^*}(u_1)\geq3$, then similarly as Case 2 of Claim \ref{cl10},
we can get a graph $G\in\mathcal{H}(m,\geq c)$ with $q(G)>q(K_{|C^*|}^{|E_1|})\geq q(G^*)$,
a contradiction.
Therefore, $d_{G^*}(u_1)=2$, which implies that $u_0\in\{u_1^-,u_1^+\}$.
For convenience, we may assume that $C^*=u_0u_1\ldots u_{|C^*|-1}u_0$,
where both $u_0$ and $u_2$ dominate $V(C^*)$.
Next we consider two cases.

\vspace{1.5mm}
\noindent
{\bf Case 1.} There exists a chord of $C^*$ not incident to $u_0$ and $u_2$.
\vspace{1.5mm}

In this case, we can see that $C^*$ has at least $2|C^*|-6$ chords,
and thus $m=e(G^*)\geq |E_1|+3|C^*|-6$.
Let $G$ be the graph obtained from a $(|C^*|-1)$-cycle by attaching
$m-|C^*|+1$ pendent edges at a vertex.
Then $G\in\mathcal{H}(m,\geq c)$ as $|C^*|\geq c+1$,
and $q(G)>q(K_{1,\Delta(G)})=\Delta(G)+1\geq|E_1|+2|C^*|-2.$
On the other hand, since $G^*\subseteq K_{|C^*|}^{|E_1|}$,
we have $q(G^*)\leq q(K_{|C^*|}^{|E_1|})\leq |E_1|+2|C^*|-2.$
It follows that $q(G)>q(G^*)$, a contradiction.

\vspace{1.5mm}
\noindent
{\bf Case 2.} All chords of $C^*$ are incident to $u_0$ or $u_2$.
\vspace{1.5mm}

In this case, we can see that $C^*$ has exactly $2|C^*|-7$ chords,
and so $m=|E_1|+3|C^*|-7$.
Let $G$ be defined as in Case 1.
Then $G\in\mathcal{H}(m,\geq c)$
and $q(G)>q(K_{1,\Delta(G)})=|E_1|+2|C^*|-3.$
On the other hand, note that $d_{G^*}(u_0)=|E_1|+|C^*|-1$, $d_{G^*}(u_1)=2$, $d_{G^*}(u_2)=|C^*|-1$,
$d_{G^*}(u_3)=d_{G^*}(u_{|C^*|})=3$, and $d_{G^*}(u)=4$ for each of other $|C^*|-5$ vertices in $V(C^*)$.
By straightforward computation, we can check that
$d_{G^*}(u)+m_{G^*}(u)\leq |E_1|+2|C^*|-3$ for each $u\in V(G^*)$,
and by Lemma \ref{le09} $q(G^*)\leq |E_1|+2|C^*|-3.$
Therefore, $q(G)>q(G^*)$, a contradiction.
This completes the proof of the claim.
\end{proof}

Now we have Claim \ref{cl10} and Claim \ref{cl11} in hand.
To complete the proof of Theorem \ref{thcir}, it suffices to show the following claim.

\begin{claim}
If $m\geq3c-4$, then $u_0$ dominates $V(C^*)$ and all chords of $C^*$ are incident to $u_0$.
\end{claim}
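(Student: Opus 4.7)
By Claim~\ref{cl10} and Claim~\ref{cl11}, we already have $|C^*|=c$ and $E(G^*)\setminus E(C^*)=E_1\cup E_2$, where $E_1$ consists of pendent edges at $u_0$ and $E_2$ consists of chords of $C^*$; hence the claim is equivalent to $G^*\cong H_{m,c}$. My plan is to rule out four types of deviation of $G^*$ from $H_{m,c}$:\ (I) a chord $uv\in E_2$ with $u\notin N_{G^*}(u_0)$; (II) a vertex $u'\in V(C^*)\setminus N_{G^*}[u_0]$ coexisting with some edge $u_0w\in E_1$; (III) a chord $uv\in E_2$ with $u_0\notin\{u,v\}$ and $\{u,v\}\subseteq N_{G^*}(u_0)$; (IV) a vertex $u'\in V(C^*)\setminus N_{G^*}[u_0]$ with $E_1=\varnothing$. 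If none occurs, then $u_0$ dominates $V(C^*)$ and every chord is incident to $u_0$. Every modification below keeps $C^*$ intact, so the modified graph still lies in $\mathcal{H}(m,\geq c)$, and any strict increase of $q$ will contradict the extremality of $G^*$.

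Deviations (I) and (II) fall directly to Lemma~\ref{leHong05}. For (I), the maximality of $x_{u_0}$ gives $x_{u_0}\geq x_v$, so the shift $vu\mapsto u_0u$ produces $G'=G^*-uv+u_0u\in\mathcal{H}(m,\geq c)$ (since $C^*\subseteq G'$) with $q(G')>q(G^*)$. For (II), Claim~\ref{cl7} gives $x_{u'}\geq x_w$, and the shift $wu_0\mapsto u'u_0$ (followed by deletion of the now-isolated vertex $w$) produces a graph in $\mathcal{H}(m,\geq c)$ with strictly larger $q$.

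Deviation (III) is the main obstacle: since $u,v\in N_{G^*}(u_0)$, their Perron entries are comparable to $x_{u_0}$ and Lemma~\ref{leHong05} does not directly apply. Following the template of Claim~5 in Section~\ref{sec3}, I would form $G''=G^*-uv+u_0w$ with $w$ a new pendent at $u_0$, so that $G''\in\mathcal{H}(m,\geq c)$. Writing
\[
X^{T}Y\bigl(q(G'')-q(G^*)\bigr)=(x_{u_0}+x_w)(y_{u_0}+y_w)-(x_u+x_v)(y_u+y_v),
\]
and using $x_w=0$ together with $x_u+x_v\leq 2x_{u_0}$, the task reduces to establishing $y_u+y_v<\tfrac{1}{2}(y_{u_0}+y_w)$. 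The hypothesis $m\geq 3c-4$ forces $d_{G''}(u_0)$, and hence $q(G'')$, to be large compared with the degrees of $u$ and $v$ (both at most $c-1$ since their neighbors lie in $V(C^*)$); iterating the eigenvalue equation at $u,v$ and their $C^*$-neighbors with the bound $y_z\leq y_{u_0}$ delivers the required estimate.

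Finally, for deviation (IV), $E_1=\varnothing$ forces $V(G^*)=V(C^*)$ by Claim~\ref{cl10}, hence $\Delta(G^*)\leq c-1$ and $q(G^*)\leq 2(c-1)$ by Lemma~\ref{le09}. Since $q(H_{m,c})\geq\Delta(H_{m,c})+1=m-c+3\geq 2c-1>2(c-1)$ whenever $m\geq 3c-4$, we obtain $q(H_{m,c})>q(G^*)$, again contradicting extremality. Thus none of (I)--(IV) can occur, proving the claim. The Perron-weight estimate of case (III) is the only delicate step; the remaining cases reduce to standard switchings that preserve $C^*$.
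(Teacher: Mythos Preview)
Your case decomposition (I)--(IV) is sound, and cases (I), (II), (IV) are handled correctly (indeed (I) becomes redundant once (II) and (IV) are established, and (II)+(IV) together reproduce the paper's opening observation that $|E_1|\geq1$, after which Claim~\ref{cl8} immediately gives that $u_0$ dominates $V(C^*)$). The genuine gap is in case~(III).

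Your heuristic for (III) asserts that ``$m\geq3c-4$ forces $d_{G''}(u_0)$, and hence $q(G'')$, to be large compared with the degrees of $u$ and $v$.'' This is false in general. Since you remove only \emph{one} chord, $d_{G''}(u_0)=d_{G^*}(u_0)+1=|E_1|+c$; but $|E_1|=m-2c+3-|E_2'|$ can be as small as $1$ when $|E_2'|$ is large, giving $d_{G''}(u_0)=c+1$ and $q(G'')$ only slightly above $c+2$. Meanwhile the remaining chords in $E_2'$ keep the degrees of other cycle vertices near $c-1$, so the eigen-equation iteration you describe cannot squeeze out $y_u+y_v<\tfrac12(y_{u_0}+y_w)$. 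Concretely, for $c=5$, $m=11$, $|E_2'|=3$, $|E_1|=1$ one has $G''$ close to $K_5$ with two pendants, and the desired inequality on $Y$ fails numerically.

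The paper avoids this by deleting \emph{all} of $E_2'$ at once, so that the comparison graph $G$ has $\Delta(G)=|E_2'|+|E_1|+c-1+1=m-c+3$, and then splits into two regimes: for $|E_2'|=1$ it uses the one-vector Rayleigh bound $q(G)-q(G^*)\geq x_{u_0}^2-(x_{u_i}+x_{u_j})^2$ together with the estimate $x_{u_i}+x_{u_j}<x_{u_0}$ coming from $q(G^*)>2c-1$; for $|E_2'|\geq2$ it bounds $q(G^*)$ directly via Lemma~\ref{le09} by computing $d_{G^*}(u)+m_{G^*}(u)$. Either route closes the argument. In fact the simplest repair of your plan is to abandon the mixed $X^TY$ identity in (III) and use the Rayleigh form instead: since $q(G^*)\geq q(H_{m,c})>m-c+3\geq 2c-1$ and every $z\in V(C^*)\setminus\{u_0\}$ has $d_{G^*}(z)\leq c-1$, the inequality $x_z\leq x_{u_0}/(q(G^*)-2c+3)<\tfrac12 x_{u_0}$ gives $x_u+x_v<x_{u_0}$, hence $q(G'')\geq q(G^*)+x_{u_0}^2-(x_u+x_v)^2>q(G^*)$, and this works uniformly for any single chord in $E_2'$.
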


\begin{proof}
Note that $|C^*|=c$ and $G^*$ contains $|E_1|$ pendent edges.
If $|E_1|=0$, then $q(G^*)\leq q(K_c)=2c-2$.
Let $G$ be the graph obtained from a $c$-cycle by attaching $m-c$ pendent edges at a vertex.
Then $q(G)>\Delta(G)+1=m-c+2\geq2c-2.$
Consequently, $q(G)>q(G^*)$, a contradiction.
Therefore, $|E_1|\geq1$, and
by Claim \ref{cl8} $u_0$ dominates $V(C^*)$.

Recall that $E_2$ is the set of chords of $C^*$.
Let $E_2'$ be the subset of $E_2$ in which each chord is not incident to $u_0$.
In the following it suffices to show $E_2'=\varnothing.$
Suppose to the contrary that $|E_2'|\geq1$.
Note that
\begin{align}\label{12}
|E_2'|+|E_1|=m-2c+3 ~~~\mbox{and} ~~~d_{G^*}(u_0)=|E_1|+c-1.
\end{align}

Now we define $G=G^*-E_2'+\{u_0w_i: i=1,\ldots,|E_2'|\}$,
where $w_1,\ldots,w_{|E_2'|}$ are isolated vertices added in $G^*$.
Then $G\in\mathcal{H}(m,\geq c)$, and by (\ref{12}) we obtain
\begin{align}\label{13}
q(G)>\Delta(G)+1=|E_2'|+d_{G^*}(u_0)+1=|E_2'|+|E_1|+c=m-c+3.
\end{align}

We first assume that $|E_2'|=1$, say $E_2'=\{u_iu_j\}$,
and let $u_k\in V(C^*)$ with $x_{u_k}=\max_{u\in V(G^*)\setminus \{u_0\}}x_u$.
Then
\begin{align*}
q(G^*)x_{u_k}=d_{G^*}(u_k)x_{u_k}+\sum_{u\in N_{G^*}(u_k)}x_u\leq (2d_{G^*}(u_k)-1)x_{u_k}+x_{u_0}.
\end{align*}
It follows that $x_{u_k}\leq \frac{x_{u_0}}{q(G^*)-2d_{G^*}(u_k)+1}$.
If $c=4$, then $G^*\cong K_c^{|E_1|}$ and so $d_{G^*}(u_k)=3$.
Thus $x_{u_i}+x_{u_j}\leq 2x_{u_k}\leq\frac{2}{q(G^*)-5}x_{u_0} $.
If $c\geq 5$, then $d_{G^*}(u_k)\leq4$.
Thus $x_{u_i}+x_{u_j}\leq2x_{u_k}\leq \frac{2}{q(G^*)-7}x_{u_0}$.
Note that $m\geq3c-4$.
By (\ref{13}), we have $q(G^*)\geq q(G)>2c-1$.
Hence, in both cases $x_{u_i}+x_{u_j}<x_{u_0}.$
It follows that
\begin{align*}
q(G)-q(G^*)\geq X^T(Q(G)-Q(G^*))X\geq (x_{u_0}+x_{w_1})^{2}-(x_{u_i}+x_{u_j})^2>0,
\end{align*}
a contradiction.
Therefore, $|E_2'|\geq2$, which also implies that $c\geq5$.

Now let $u^*\in V(G^*)$ such that $d_{G^*}(u^*)+m_{G^*}(u^*)$ is maximal.
If $u^*\notin V(C^*)$,
then $N_{G^*}(u^*)=\{u_0\}$. By Lemma \ref{le09},
$q(G^*)\leq d_{G^*}(u^*)+m_{G^*}(u^*)=1+d_{G^*}(u_0)=|E_1|+c$.
Combining with (\ref{13}), we have $q(G)>q(G^*),$
a contradiction. Hence, $u^*\in V(C^*)$.
In the following, it remains to consider two cases.

\vspace{1.5mm}
\noindent
{\bf Case 1.} $u^*=u$ for some $u\in V(C^*)\setminus \{u_0\}.$
\vspace{1.5mm}

Observe that $d_{G^*}(u)+m_{G^*}(u)\leq d_{G^*}(u_0)+3(d_{G^*}(u)-1)+2|E_2'|$.
Combining with (\ref{12}),
\begin{align*}
d_{G^*}(u)+m_{G^*}(u)\leq 2m-3c+2-|E_1|+3d_{G^*}(u)\leq 2m-3c+1+3d_{G^*}(u),
\end{align*}
as $|E_1|\geq1$.
It follows that
\begin{align*}
q(G^*)\leq d_{G^*}(u)+m_{G^*}(u)\leq d_{G^*}(u)+3+\frac{2m-3c+1}{d_{G^*}(u)}.
\end{align*}
Since $u\in V(C^*)\setminus \{u_0\}$, we have $2\leq d_{G^*}(u)\leq c-1$,
and hence
\begin{align}\label{15}
q(G^*)\leq \max\left\{5+\frac{2m-3c+1}{2},c+2+\frac{2m-3c+1}{c-1}\right\}\leq m-c+3,
\end{align}
as $c\geq5$ and $m\geq3c-4$.
Comparing (\ref{15}) with (\ref{13}), we have $q(G)>q(G^*)$, a contradiction.

\vspace{1.5mm}
\noindent
{\bf Case 2.} $u^*=u_0.$
\vspace{1.5mm}

Since $u_0$ is a dominating vertex, $d_{G^*}(u_0)+m_{G^*}(u_0)=2m-d_{G^*}(u_0).$
It follows that
\begin{align*}
q(G^*)\leq d_{G^*}(u_0)+m_{G^*}(u_0)= d_{G^*}(u_0)-1+\frac{2m}{d_{G^*}(u_0)}.
\end{align*}
Note that $|E_1|\geq1$ and $|E_2'|\geq2$.
Then $d_{G^*}(u_0)=|E_1|+c-1\geq c$.
Moreover, by (\ref{12}) $|E_1|=m-2c+3-|E_2'|\leq m-2c+1$,
and so $d_{G^*}(u_0)=|E_1|+c-1\leq m-c.$
It follows that
\begin{align*}
q(G^*)\leq \max\left\{c-1+\frac{2m}{c}, m-c-1+\frac{2m}{m-c}\right\}\leq m-c+3,
\end{align*}
as $c\geq5$ and $m\geq3c-4$.
Combining with (\ref{13}), we also get a contradiction.
This completes the proof.
\end{proof}

\section{Conclusion remarks}

To end this paper, we present a question for further research.
For $m\geq 3c-4$, Theorem \ref{thcir} determines the maximum $q(G)$ over all graphs in $\mathcal{H}(m,c)$.
A natural question is to consider the case $c+1\leq m\leq3c-5$.

\begin{ques}
For $c+1\leq m\leq3c-5$, what is the maximum signless Laplacian spectral radius over all graphs in $\mathcal{H}(m,c)$?
\end{ques}

\section*{Acknowledgements}
The authors are grateful to the referees for their careful reading and many valuable suggestions.

\end{document}